\NeedsTeXFormat{LaTeX2e}
[1994/12/01]
\documentclass[mn,fleqn]{w-art}
\usepackage{times}
\usepackage{amsmath}
\usepackage{w-thm}
\usepackage{bm}
\usepackage{enumerate}
\usepackage{amssymb,latexsym,amsmath,array}
\usepackage{makeidx,amsfonts}
\usepackage{longtable,graphicx,ifthen,calc}

\numberwithin{equation}{section}
\chardef\bslash=`\\ 

\hfuzz1pc 

\newcommand{\eval}[2][\right]{\relax
\ifx#1\right\relax \left.\fi#2#1\rvert}

\begin{document}

\

\

{\it Submitted to Mathematische Nachrichten, 2006

Registered as Technical Report 2006-1 at the University of Siena.}

\

\

\Volume{} \Year{} 
\pagespan{1}{}
\Receiveddate{} \Reviseddate{} \Accepteddate{} \Dateposted{}

\keywords{autonomous systems, periodic perturbations, limit
cycles, topological degree, periodic solutions.}

\subjclass[msc2000]{34C25, 34A34, 34D10, 47H11, 47H14}



\title[Short Title]{A continuation principle for a class of periodically
perturbed autonomous systems}


\author[Sh. First Author]{Mikhail Kamenskii \inst{1}}\address[\inst{1}]{Department of Mathematics,
Voronezh State University, Voronezh, Russia}

\author[Sh. Second Author]{Oleg Makarenkov\inst{1}}

\author[Sh. Third Author]{Paolo Nistri \footnote{Corresponding author:
    e-mail: {\sf pnistri@dii.unisi.it}, Phone: +39\,0577233603, Fax:
    +39\,0577233602}\inst{2}}\address[\inst{2}]{Dipartimento di Ingegneria dell' Informazione,
    Universit\`a di Siena, 53100 Siena, Italy}

\thanks{Supported by the research project GNAMPA
:``Qualitative Analysis and Control of Hybrid Systems'', by RFBR
grants 02-01-00189, 05-01-00100, by U.S.CRDF - RF Ministry of
Education grant VZ-010 and by President of Russian Federation
Fellowship for Scientific Training Abroad}

\begin{abstract}
The paper deals with a $T$-periodically perturbed autonomous
system in $\mathbb{R}^n$ of the form
\begin{equation}\tag{PS}
\dot x=\psi(x)+\varepsilon\phi(t,x,\varepsilon)
\end{equation}
with $\varepsilon>0$ small. The main goal of the paper is to
provide conditions ensuring the existence of $T$-periodic
solutions to (PS) belonging to a given open set $W\subset
C([0,T],\mathbb{R}^n).$ This problem is considered in the case
when the boundary $\partial W$ of $W$ contains at most a finite
number of nondegenerate $T$-periodic solutions of the autonomous
system $\dot x=\psi(x)$. The starting point of our approach is the
following property due to Malkin: if for any $T$-periodic limit
cycle $x_0$ of $\dot x=\psi(x)$ belonging to $\partial W$ the
so-called bifurcation function $f_{x_0}(\theta),$
$\theta\in[0,T],$ associated to $x_0$, see (\ref{biffun}),
satisfies the condition $f_{x_0}(0)\not=0$ then the integral
operator
\[
(Q_\varepsilon x)(t)=x(T)+\int_0^t
\psi(x(\tau))d\tau+\varepsilon\int_0^t
\phi(\tau,x(\tau),\varepsilon)d\tau, \quad t\in[0,T],
\]
does not have fixed points on $\partial W$ for all $\varepsilon>0$
sufficiently small. By means of the Malkin's bifurcation function
we then establish a formula to evaluate the Leray-Schauder
topological degree of $I-Q_\varepsilon$ on $W.$ This formula
permits to state existence results that generalize or improve
several results of the existing literature. In particular, we
extend a continuation principle due to Capietto, Mawhin and
Zanolin where it is assumed that $\partial W$ does not contain any
$T$-periodic solutions of the unperturbed system. Moreover, we
obtain generalizations or improvements of some existence results
due to Malkin and Loud.
\end{abstract}

\maketitle

\section { Introduction}

The aim of this paper is to provide conditions ensuring the
existence of $T$-periodic solutions to the $T$-periodically
perturbed system of the form
\begin{equation}\label{ps}
  \dot x=\psi(x)+\varepsilon\phi(t,x,\varepsilon)
\end{equation}
belonging to a given set $W\subset C([0,T],\mathbb{R}^n).$ Here we
assume that
\begin{equation}\label{reg}
\psi\in C^1(\mathbb{R}^n,\mathbb{R}^n){\rm\ and\ }
\phi:\mathbb{R}\times\mathbb{R}^n\times[0,1]\to\mathbb{R}^n{\rm\
satisfies\ Carath\acute{e}odory\ type\ conditions,}\end{equation}
i.e. $\phi(\cdot,x,\varepsilon)$ is (Lebesgue) measurable for each
$(x,\varepsilon),$ $\phi(t,\cdot,\cdot)$ is continuous for almost
all (a.a.) $t$ and, for each $r>0$ there exists $b_r\in
L^1([0,T],\mathbb{R}_+)$ such that $|\phi(t,x,\varepsilon)|\le
b_r(t)$ for a.a. $t\in[0,T]$ and all $|x|\le r,$
$\varepsilon\in[0,1].$ Moreover, $\phi$ is $T$-periodic in time
and any solution $x\in C([0,T],\mathbb{R}^n)$ to (\ref{ps})
satisfying the boundary condition
\begin{equation}\label{kam}
  x(0)=x(T)
\end{equation}
will be called a $T$-periodic solution to (\ref{ps}). Any
$T$-periodic function $x\in C([0,T],\mathbb{R}^n)$ is considered
as extended from $[0,T]$ to $\mathbb{R}$ by $T$-periodicity.
Moreover, any constant function $x\in C([0,T],\mathbb{R}^n)$ is
identified with the vector $x(0)$ of $\mathbb{R}^n.$ Let
$Q_\varepsilon:C([0,T],\mathbb{R}^n)\to C([0,T],\mathbb{R}^n)$ be
the integral operator given by
\[
  (Q_\varepsilon x)(t)=x(T)+\int_0^t \psi(x(\tau))d\tau+\varepsilon\int_0^t
  \phi(\tau,x(\tau),\varepsilon)d\tau,\quad t\in[0,T],\ \varepsilon>0,
\]
whose fixed points are $T$-periodic solutions to (\ref{ps}). In
the case when
\begin{equation}\label{mawc1}
Q_0 x\not=x\quad {\rm for\ any\ } x\in\partial W
\end{equation}
and
\begin{equation}\label{mawc2}
  d_{\mathbb{R}^n}(\psi,W\cap\mathbb{R}^n)\not=0,
\end{equation}
where $d_{\mathbb{R}^n}(\psi,W\cap\mathbb{R}^n)$ is the Brouwer
topological degree of $\psi$ in $W\cap\mathbb{R}^n,$ the existence
problem of $T$-periodic solutions to (\ref{ps}) has been solved by
Capietto, Mawhin and Zanolin in \cite{maw}. In fact, they proved
(\cite{maw}, Corollary 1), that under conditions (\ref{mawc1}) and
(\ref{mawc2}) the following formula holds
\begin{equation}\label{mform}
  d(I-Q_0,W)=(-1)^n d_{\mathbb{R}^n}(\psi,W\cap\mathbb{R}^n),
\end{equation}
where $d(I-Q_0,W)$ is the Leray-Schauder topological degree of
$I-Q_0$ in $W.$  It follows from (\ref{mform}) that
\begin{equation}\label{mform1}
  d(I-Q_\varepsilon,W)=(-1)^n d_{\mathbb{R}^n}(\psi,W\cap\mathbb{R}^n)
\end{equation}
for any $\varepsilon>0$ sufficiently small. Therefore under
conditions (\ref{mawc1}) and (\ref{mawc2}) system (\ref{ps}) has a
$T$-periodic solution in $W$ for any perturbation term $\phi$ and
any sufficiently small $\varepsilon>0.$  Observe that the
assumption (\ref{mawc2}) implies that the set $W$ contains a
constant solution of
\begin{equation}\label{np}
  \dot x=\psi(x).
\end{equation}
In \cite{maw} many relevant examples satisfying conditions
(\ref{mawc1}) and (\ref{mawc2}) are provided. Moreover, the
authors have focused several results due to I.~Berstein and
A.~Halanay, J.~Cronin, A.~Lando, E.~Muhamadiev and others, which
have been generalized or improved.

The main goal of this paper is to provide conditions on the
perturbation term $\phi$ in such a way that, for $\varepsilon>0$
sufficiently small, $d(I-Q_\varepsilon,W)$ is defined and
different from zero for a wider class of sets $W.$ Indeed, through
the paper we will not require (\ref{mawc1}), i.e. we will allow
$\partial W$  to contain fixed point of $Q_0.$ Under this more
general condition, we will establish a formula for
$d(I-Q_\varepsilon,W)$ that guarantees, under suitable conditions
on $\phi$, that $d(I-Q_\varepsilon,W)\neq 0$ even in the case when
$d_{\mathbb{R}^n}(\psi,W\cap\mathbb{R}^n)=0.$ Precisely, we assume
that
\begin{equation}\label{c2a}
{\rm the\ set\ } \mathfrak{S}_W=\left\{x\in\partial W:Q_0
x=x\right\}{\rm\ is\ finite,}
\end{equation}
and for any $x_0\in\mathfrak{S}_W$  the linearized system
\begin{equation}\label{ls}
  \dot y=\psi'(x_0(t))y
\end{equation}
has the  characteristic multiplier $1$ of multiplicity 1, i.~e.
any $x_0\in\mathfrak{S}_W$ is a nondegenerate limit cycle of
(\ref{np}). It is clear that, under assumption (\ref{c2a}), the
topological degree $d(I-Q_0,W)$ is not necessarily defined. The
approach proposed in this paper to overcome this difficulty
consists in introducing the Malkin's bifurcation function
\begin{equation}\label{biffun}
f_{x_0}(\theta) = {\rm sign}\left<\dot
x_0(0),z_0(0)\right>\int\limits_0^T\left<z_0(\tau),
\phi(\tau-\theta,x_0(\tau),0)\right>d\tau,\end{equation} where
$z_0$ is a nontrivial $T$-periodic solution of the adjoint system
\begin{equation}\label{as}
\dot z=-(\psi'(x_0(t)))^*z.
\end{equation}
From \cite{mal} (or \cite{malb}, Theorem p.~387) we have that if
\begin{equation}\label{maln}
f_x(0)\not=0\quad{\rm for\ any\ }x\in\mathfrak{S}_W \end{equation}
then
\begin{equation}\label{mallemma}
{\rm for \ every\ }\varepsilon>0{\rm \ sufficiently\ small\ the\
topological\ degree\ } d(I-Q_\varepsilon,W) {\rm\  is\  defined.}
\end{equation}
In this paper we prove in Theorem \ref{thm2} that if (\ref{maln})
is satisfied then, for all $\varepsilon>0$ sufficiently small, we
have
\begin{equation}\label{kmnformi}
d(I-Q_\varepsilon,W)=(-1)^n
d_{\mathbb{R}^n}(\psi,W\cap\mathbb{R}^n)-\sum_{x\in\mathfrak{S}_W:~\Theta_W(x)
\not=\emptyset}(-1)^{\beta(x)}
d_{\mathbb{R}}\left(f_x,\left(0,\min\{\Theta_W(x)\}\right)\right),
\end{equation}
where
\[
\Theta_W(x)=\left\{\theta_0\in(0,T):S_{\theta_0}\,x\in\partial W,\
S_\theta\,
  x\in W {\rm\ for\ any\ }\theta\in(0,\theta_0)\right\},
  \quad{\rm for\ any\ }x\in\mathfrak{S}_W,
\]
\[
(S_\theta\,x)(t) = x(t+\theta) \quad{\rm and}
\]
\[
\beta(x_0)\; {\rm is\ the\ sum\ of\ the\ multiplicities\ of\ the\
characteristic\ multipliers\ greater \ than\ } 1 {\rm \ of\
}(\ref{ls}).
\]
Therefore it follows that for any perturbation term $\phi$
satisfying conditions (\ref{c2a}), (\ref{maln}) if
\begin{equation}\label{kmncond}
(-1)^n
d_{\mathbb{R}^n}(\psi,W\cap\mathbb{R}^n)-\sum_{x\in\mathfrak{S}_W:~\Theta_W(x)
\not=\emptyset}(-1)^{\beta(x)}{
d}_{\mathbb{R}}\left(f_x,\left(0,\min\{\Theta_W(x)\}\right)\right)\not=0
\end{equation}
then, for any $\varepsilon>0$ sufficiently small, system
(\ref{ps}) has a $T$-periodic solution in $W.$ Observe that if
(\ref{mawc2}) is not satisfied, but there exist at least one
$x\in\mathfrak{S}_W$ such that $\Theta_W(x)\not=\emptyset$ then
assumption (\ref{kmncond}) can be fulfilled by a suitable choice
of the perturbation term $\phi.$ In this sense assumption
(\ref{kmncond}) is weaker than (\ref{mawc2}).

The second term on the right hand side of (\ref{kmnformi}) is
similar to that of the Krasnosel'skii-Zabreyko's formula for
computing the index of a degenerate fixed point of $Q_0$ by means
of a reduction to a subspace (in our case one-dimensional), see
(\cite{krazab}, formula 24.13). However, the related
Krasnosel'skii-Zabreyko result (\cite{krazab}, Theorem~24.1) can
be applied only in the case when the operator $Q_0$ has a
particular form ensuring that $Q_0$ has only isolated fixed
points. This is not our case since any $T$-periodic cycle of
(\ref{np}) is a non-isolated fixed point of $Q_0.$

Furthermore, observe that the case when $\mathfrak{S}_W$ is
nonempty was already treated in the literature. For instance, if
$\psi=0$ then any solution of (\ref{np}) is $T$-periodic,
$\mathfrak{S}_W=\partial W$ and $d(I-Q_\varepsilon,W)$ can be
evaluated by means of the following formula due to Mawhin, see
(\cite{mawPhD1} and \cite{mawPhD2})
\begin{equation}\label{mawPhD}
  d(I-Q_\varepsilon,W)=d_{\mathbb{R}^n}\left(-\int\limits_0^T
  \phi(\tau,\cdot,0)d\tau,W\cap
  \mathbb{R}^n\right).
\end{equation}
Mawhin proved (\ref{mawPhD}) in the case when $\varepsilon>0$ is
not necessarily small. The same formula can be also used when
$\psi\not=0,$ but any solution of (\ref{np}) in $\overline{W}$ is
$T$-periodic (see \cite{sch}, formulas 3.1-3.3). This assumption
has been considerably weakened by the authors in \cite{dan} for a
wide class of sets $W.$ Specifically, in \cite{dan} it was assumed
that there exists $U\subset\mathbb{R}^n$ such that $W$ is the set
of all continuous functions from $[0,T]$ to $U$ and any point of
$\partial U$ is the initial condition of a $T$-periodic solution
to (\ref{np}), namely it was still assumed that $\mathfrak{S}_W$
is an infinite subset of $\partial W$. For $\varepsilon>0$
sufficiently small formula (\ref{mawPhD}) was expressed as
follows, see also (\cite{non}, formula 46),
\begin{equation}\label{mawPhD1}
  d(I-Q_\varepsilon,W)=d_{\mathbb{R}^n}\left(-\int\limits_0^T
  \left(x'_{(2)}(\tau,\cdot)\right)^{-1}\phi(\tau,x(\tau,\cdot),0)
  d\tau, U\right),
\end{equation}
where $x(\cdot,\xi)$ is the solution of (\ref{np}) satisfying
$x(0,\xi)=\xi.$ Hence, if
$d_{\mathbb{R}^n}(\psi,W\cap\mathbb{R}^n)=0$ then (\ref{kmnformi})
can be considered as a further development of (\ref{mawPhD}) for
the special case when $\mathfrak{S}_W$ is finite.
In fact, the following formula holds, (see (\ref{gf}) in the proof
of next Theorem \ref{thm1}),
\[ f_{x_0}(\theta)={\rm sign}\left<\dot x_0(0),z_0(0)\right>\left<
\int\limits_0^T
  \left(x'_{(2)}(\tau,\cdot)\right)^{-1}\phi(\tau,x(\tau,\cdot),0)
  d\tau,
z_0\left(\theta\right)\right>\quad{\rm for\ any\ }\theta\in[0,T].
\]

The paper is organized as follows. Section 2 is devoted to the
proof of formula (\ref{kmnformi}) and its variants. In section 3
by different choices of the set $W$ we obtain several new
existence results for $T$-periodic solutions to (\ref{ps}). In
particular, we generalize or improve some existence results due to
Loud and Malkin proved in \cite{loud} and \cite{mal} respectively.

\section {Main results}

Let $x^{-1}(t,\cdot)$ be the inverse of $x(t,\cdot),$ that is
$x(t,x^{-1}(t,\xi))=\xi$ for any $t\in\mathbb{R}$ and any $\xi\in
\mathbb{R}^n.$ For any set $V$ of $\mathbb{R}^n,$ define the set
$W_V$ of $C([0,T],\mathbb{R}^n)$ by
\[
W_V=\left\{\widehat {x}\in
C([0,T],\mathbb{R}^n):x^{-1}(t,\widehat{ x}(t))\in V,\ {\rm for\
any\ } t\in [0,T]\right\}.
\]
Clearly, $W_V$ is open in $C([0,T],\mathbb{R}^n)$ provided that
$V$ is open in $\mathbb{R}^n.$ In the sequel by $B_\delta(A)$ we
denote the $\delta$-neighborhood of the set $A$ with respect to
the norm of the space containing $A.$ The following result is
crucial for the proof of our Theorem  \ref{thm2}, but it has also
an independent interest for some applications as shown in Section
3.

\begin{theorem}\label{thm1}
Let $x_0$ be a nondegenerate $T$-periodic limit cycle of system
(\ref{np}). Let $0\le\theta_1<\theta_2\le \theta_1+\frac{T}{p},$
where $p\in\mathbb{N}$ and $\frac{T}{p}$ is the least period of
$x_0.$ Assume that $f_{x_0}(\theta_1)\not=0$ and
$f_{x_0}(\theta_2)\not=0.$ Then, for a given $\alpha>0,$ there
exist $\delta_0>0$ and a family of open sets
$\{V_\delta\}_{\delta\in(0,\delta_0]}$ satisfying the properties

\noindent 1) $x_0((\theta_1,\theta_2))\subset V_\delta\subset
B_{\delta}(x_0((\theta_1,\theta_2))),$

\noindent 2) $ \partial V_\delta\cap
x_0([\theta_1,\theta_2])=\{x_0(\theta_1),x_0(\theta_2)\}$

\noindent and such that for any $\delta\in(0,\delta_0]$ and any
$\varepsilon\in(0,\delta^{1+\alpha}]$ the degree
$d(I-Q_\varepsilon,W_{V_\delta})$ is defined and it can be
evaluated by the following formula
\[d(I-Q_\varepsilon,W_{V_\delta})=\,-\,(-1)^{\beta(x_0)}d_{\mathbb{R}}
(f_{x_0},(\theta_1,\theta_2)).
\]
\end{theorem}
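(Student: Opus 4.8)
The plan is to reduce the degree computation on the infinite-dimensional space $C([0,T],\mathbb{R}^n)$ to a finite-dimensional problem via the initial-value parametrization $\xi\mapsto x(\cdot,\xi)$, and then to localize near the arc $x_0([\theta_1,\theta_2])$ so that the degree splits into a contribution of the "interior" constant solutions (which is absent here, since $V_\delta$ is a thin tube around a cycle and contains no equilibria for $\delta$ small) and a contribution coming from the two endpoints of the arc, which is exactly the one-dimensional degree of the bifurcation function. First I would pass to the change of variable $\widehat x(t)=x(t,\xi(t))$; since $Q_\varepsilon$ acts on $C([0,T],\mathbb{R}^n)$ and its fixed points are $T$-periodic solutions of (\ref{ps}), the set $W_{V_\delta}$ is designed precisely so that in the new coordinates the relevant fixed-point equation becomes a perturbed equation on $C([0,T],V_\delta)$ governed, to leading order, by $\varepsilon\int_0^T (x'_{(2)}(\tau,\cdot))^{-1}\phi(\tau,x(\tau,\cdot),0)\,d\tau$, the vector field appearing in (\ref{mawPhD1}). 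Using the identity relating $f_{x_0}(\theta)$ to this integral (quoted just before Section 2), the zeros of the reduced field along the cycle correspond to the zeros of $f_{x_0}$.

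Next I would construct the family $\{V_\delta\}$ explicitly. The natural choice is a flow-box neighborhood: write a point near the arc as $x_0(s)+$ (small transverse displacement), take $V_\delta$ to be $\{x_0(s)+v: s\in(\theta_1,\theta_2),\ v\perp \dot x_0(s),\ |v|<\rho_\delta(s)\}$ with a profile function $\rho_\delta$ that shrinks to $0$ near $s=\theta_1$ and $s=\theta_2$ so that property 2) holds and $\partial V_\delta$ meets the cycle only at the two endpoints $x_0(\theta_1),x_0(\theta_2)$. The hypotheses $f_{x_0}(\theta_1)\neq0$, $f_{x_0}(\theta_2)\neq0$ and Malkin's result (\ref{mallemma})–(\ref{maln}) then guarantee that $Q_\varepsilon$ has no fixed points on $\partial W_{V_\delta}$ for $\varepsilon$ small, so $d(I-Q_\varepsilon,W_{V_\delta})$ is defined; the quantitative coupling $\varepsilon\in(0,\delta^{1+\alpha}]$ is what makes the perturbation small relative to the transverse size of the tube, uniformly as $\delta\to0$, and this is where I would track constants carefully.

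Then I would compute the degree. By excision and the product/reduction property of Leray–Schauder degree (in the spirit of the Krasnosel'skii–Zabreyko reduction mentioned in the introduction, but adapted to the non-isolated situation), the fixed-point problem in the tube factors into the transverse directions — where the linearization $\dot y=\psi'(x_0(t))y$ is nondegenerate on the $(n-1)$-dimensional complement of $\dot x_0$, contributing the sign $(-1)^{\beta(x_0)}$ — and the remaining one-dimensional "phase" direction along the cycle, where the effective bifurcation equation is $f_{x_0}(\theta)=0$ on $(\theta_1,\theta_2)$, contributing $d_{\mathbb{R}}(f_{x_0},(\theta_1,\theta_2))$. Assembling the signs (including the $-1$ that appears from the orientation of the phase variable versus the degree normalization) yields $d(I-Q_\varepsilon,W_{V_\delta})=-(-1)^{\beta(x_0)}d_{\mathbb{R}}(f_{x_0},(\theta_1,\theta_2))$.

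The main obstacle I anticipate is making the reduction rigorous in the presence of a non-isolated fixed-point set: the flow of $\dot x=\psi(x)$ along the cycle gives a one-parameter family of fixed points of $Q_0$, so one cannot directly invoke a hyperbolic splitting. I would handle this by working at level $\varepsilon>0$, where the perturbation breaks the degeneracy: a Lyapunov–Schmidt-type splitting of $I-Q_\varepsilon$ along the kernel direction $\dot x_0$, with uniform estimates (this is the role of $\alpha$), shows that on $\partial W_{V_\delta}$ the map is homotopic to a product of a fixed, orientation-determining linear isomorphism on the transverse factor and the scalar map $\theta\mapsto f_{x_0}(\theta)$ on the phase factor — and then the degree of a product is the product of the degrees. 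Getting the homotopies to stay fixed-point-free on $\partial W_{V_\delta}$ throughout, using only $f_{x_0}(\theta_i)\neq0$ at the endpoints together with the shrinking profile of $V_\delta$, is the delicate point.
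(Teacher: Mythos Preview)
Your outline follows the same skeleton as the paper: pass to the initial-value coordinate $u(t)=x^{-1}(t,\widehat x(t))$ so that the fixed-point operator becomes $G_\varepsilon u = x(T,u(T))+\varepsilon\int_0^t(x'_{(2)})^{-1}\phi\,d\tau$, build a tubular neighbourhood of the arc $x_0([\theta_1,\theta_2])$, and factor the degree as (transverse contribution)$\times$(one-dimensional phase contribution). Two of your concrete technical choices, however, diverge from the paper in ways that matter.

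First, your transverse section is $\{v:v\perp\dot x_0(s)\}$. The paper instead takes the transverse space to be the range of $Y(s)P_{n-1}$, where $Y$ is the Floquet fundamental matrix (\ref{Yt}); by Lemma~\ref{lemma1} this is the hyperplane $\{v:\langle v,z_0(s)\rangle=0\}$, orthogonal to the \emph{adjoint} solution $z_0$, not to $\dot x_0$. This is not a cosmetic difference: with the Floquet choice the linearised Poincar\'e map restricted to the transverse factor is exactly $e^{\Lambda T}$, so that $I-\Gamma^{-1}A_\varepsilon\Gamma$ becomes the honest product $(I-e^{\Lambda T})\times\varepsilon f$ and the degree factors on the nose. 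With the $\dot x_0$-orthogonal section the transverse map is only conjugate to $e^{\Lambda T}$ after a further projection, and the clean product structure is lost; you would need an extra argument to recover it.

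Second, the paper does \emph{not} use a tube whose profile shrinks to zero at the ends. It takes the straight cylinder $C_\delta=\{\zeta:\|P_{n-1}\zeta\|<\delta,\ \zeta^n\in(-\tfrac{\theta_2-\theta_1}{2},\tfrac{\theta_2-\theta_1}{2})\}$ and sets $V_\delta=\Gamma(C_\delta)$. The boundary then has two pieces, and the heart of the proof is a single explicit homotopy $D_\varepsilon(\lambda,\cdot)$ from $I-G_\varepsilon$ to $I-\widehat A_\varepsilon$, where $A_\varepsilon$ is an auxiliary finite-dimensional field built from the piecewise-linear function $f$ of (\ref{functionf}). Showing $D_\varepsilon(\lambda,\cdot)\neq0$ on $\partial\widehat W_{\Gamma(C_\delta)}$ splits into exactly the two cases your Lyapunov--Schmidt heuristic anticipates: on the lateral boundary ($\|P_{n-1}\zeta_k\|=\delta_k$, case (\ref{case2})) one uses $\varepsilon\le\delta^{1+\alpha}$ to show the transverse part dominates and reaches a contradiction via (\ref{res})--(\ref{abc}); on the end caps ($\zeta_k^n+\overline\theta\in\{\theta_1,\theta_2\}$, case (\ref{case1})) one projects onto $z_0$ and lands on (\ref{rew1}), which contradicts $f_{x_0}(\theta_i)\neq0$. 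The auxiliary field $A_\varepsilon$ is then computed directly in $\zeta$-coordinates, and the extra $-1$ you mention is not an orientation convention but the outcome of the calculation $d_{\mathbb R}(\varepsilon f,(-\tfrac{\theta_2-\theta_1}{2},\tfrac{\theta_2-\theta_1}{2}))=-d_{\mathbb R}(f_{x_0},(\theta_1,\theta_2))$, which in turn traces back to $(I-G_\varepsilon)(x_0(\theta))\approx -\varepsilon\widehat F(x_0(\theta))$.

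So your plan is sound at the strategic level, but to turn it into a proof you should replace the $\dot x_0$-orthogonal section by the $z_0$-orthogonal (Floquet) one, and either switch to the flat-ended cylinder or explain precisely how the pinched profile interacts with the homotopy near the endpoints; the paper's route via the explicit comparison field $A_\varepsilon$ is what makes the boundary argument and the final sign both transparent.
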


Now we introduce some preliminary notions and results necessary
for the proof of the theorem. Let $x_0$ be a nondegenerate limit
cycle of (\ref{np}), then there exists, see e.g. (\cite{dem},
Lemma 1  Chap.~IV, \S 20), a fundamental matrix $Y(t)$ of system
(\ref{ls}) having the form
\begin{equation}\label{Yt}
  Y(t)=\Phi(t)\left(\begin{array}{cc} {\rm e}^{\Lambda t} & 0_{{n-1}\times 1} \\
  0_{1\times{n-1}}
  & 1 \end{array}\right),
\end{equation}
where $\Phi$ is a $T$-periodic Floquet matrix and $\Lambda$ is a
constant $(n-1)\times(n-1)$-matrix with eigenvalues different from
$0.$ In (\ref{Yt}) it is denoted by $0_{ i\times j}$ the $i\times
j$ zero matrix, in the sequel we will omit these subindexes when
confusion will not arise. For any $\delta>0$ define the set
$C_\delta\subset\mathbb{R}^n$ as follows
\[C_\delta=\left\{\zeta\in\mathbb{R}^n:\|P_{n-1}\zeta\|<\delta,\
\zeta^n\in\left(-\frac{\theta_2-\theta_1}{2},\frac{\theta_2-\theta_1}
{2}\right)\right\},\] where
\[P_{n-1}\zeta=\left(\begin{array}{c}\zeta^1\\ \vdots \\ \zeta^{n-1} \\
0
 \end{array}\right)
\]
$\zeta^k$ is the $k$-th component of the vector $\zeta$ and
$\theta_1$, $\theta_2$ are as in Theorem \ref{thm1}. Let
$\Gamma:B_\Delta(C_\delta)\to \Gamma(B_\Delta(C_\delta)),$ $
\Delta>0,$ be as follows
\[
 \Gamma(\zeta)= \frac{Y(\zeta^n+\overline{\theta})}{\|Y\|_{M_T}}
 P_{n-1}\zeta+x_0(\zeta^n+\overline{\theta}),
\]
where \[ \overline{\theta}=\frac{\theta_1+\theta_2}{2}\quad{\rm
and}\quad \|Y\|_{M_T}=\max_{\theta\in[0,T]}\|Y(\theta)\|.\]

We have the following preliminary properties.
\begin{lemma}\label{lemma1} $\left<Y(\theta)P_{n-1}\zeta,z_0(\theta)\right>=0$ for any
$\theta\in[0,T]$ and any $\zeta\in\mathbb{R}^n.$ Moreover, if
$\left<\xi,z_0(\theta)\right>=0$ for any $\theta\in [0,T]$, then
there exists $\zeta\in\mathbb{R}^n$ such that
$\left<Y(\theta)P_{n-1}\zeta,z_0(\theta)\right>=0$ for any
$\theta\in [0,T].$
\end{lemma}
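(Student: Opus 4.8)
The plan is to exploit the structure of the fundamental matrix $Y(t)$ in (\ref{Yt}) together with the defining property of $z_0$ as a $T$-periodic solution of the adjoint system (\ref{as}). Recall that if $Y(t)$ is a fundamental matrix of (\ref{ls}) then $(Y(t)^{-1})^*$ is a fundamental matrix of the adjoint system (\ref{as}): indeed, differentiating the identity $Y(t)^*(Y(t)^{-1})^* = I$ and using $\dot Y = \psi'(x_0(t))Y$ gives $\frac{d}{dt}(Y(t)^{-1})^* = -(\psi'(x_0(t)))^*(Y(t)^{-1})^*$. Consequently, for any fixed vector $v$ the function $t\mapsto (Y(t)^{-1})^* v$ solves (\ref{as}), and the quantity $\left<Y(t)w, (Y(t)^{-1})^* v\right> = \left<w,v\right>$ is constant in $t$ for all vectors $v,w$.

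For the first assertion, I would argue as follows. Since $x_0$ is a nondegenerate limit cycle, the space of $T$-periodic solutions of the adjoint system (\ref{as}) is one-dimensional (the characteristic multiplier $1$ of (\ref{ls}) has multiplicity $1$, hence the same holds for the adjoint), so $z_0(t)$ spans it. From the block form (\ref{Yt}), the last column of $Y(t)$, call it $Y(t)e_n = \Phi(t)e_n$, is a $T$-periodic solution of (\ref{ls}) (it equals $\dot x_0(t)$ up to a constant, since $\dot x_0$ is the canonical $T$-periodic solution of the variational equation along the cycle). Dually, the $T$-periodic solution of the adjoint equation is $z_0(t) = c\,(Y(t)^{-1})^* e_n$ for some constant $c\neq 0$; equivalently $Y(t)^* z_0(t)$ is the constant vector $c\, e_n$. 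Now $P_{n-1}\zeta$ has vanishing $n$-th component by definition, so
\[
\left<Y(\theta)P_{n-1}\zeta, z_0(\theta)\right> = \left<P_{n-1}\zeta, Y(\theta)^* z_0(\theta)\right> = c\left<P_{n-1}\zeta, e_n\right> = 0
\]
for every $\theta\in[0,T]$ and every $\zeta\in\mathbb{R}^n$, which is the claim. The identification $z_0(t)=c(Y(t)^{-1})^*e_n$ is the one small point that needs care: it follows because both sides are $T$-periodic solutions of (\ref{as}) and the latter is a nonzero such solution (its value at $t=0$ is $c\,(Y(0)^{-1})^*e_n \neq 0$).

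For the second assertion the statement as written is essentially a closure/consistency remark: if $\xi$ is orthogonal to $z_0(\theta)$ for all $\theta$, one must produce $\zeta$ with $\left<Y(\theta)P_{n-1}\zeta,z_0(\theta)\right>=0$ for all $\theta$ — but by the first part \emph{every} choice of $\zeta$ works, so one may simply take $\zeta=0$, or indeed any $\zeta$ at all. (Presumably the intended reading is stronger: that such $\xi$ is of the form $Y(\theta_*)P_{n-1}\zeta$ for suitable $\theta_*,\zeta$, i.e. that the orthogonal complement of the span of $\{z_0(\theta):\theta\in[0,T]\}$ is captured by the $P_{n-1}$-image under the Floquet change of variables. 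I would prove that version by noting $\{z_0(\theta)\}$ spans, via $z_0(\theta)=c(Y(\theta)^{-1})^*e_n$ and the invertibility of $\Phi$, a subspace whose annihilator is exactly $Y(\theta_*)\,\mathrm{span}\{e_1,\dots,e_{n-1}\}$ for any fixed $\theta_*$; then $\xi$ in that annihilator is $Y(\theta_*)P_{n-1}\zeta$ with $\zeta = P_{n-1}Y(\theta_*)^{-1}\xi$.) The main obstacle is therefore not computational but interpretive: pinning down precisely which vector $z_0$ is, namely that it is (a constant multiple of) the last column of $(Y(t)^{-1})^*$, and hence that $Y(t)^*z_0(t)$ is a constant multiple of $e_n$; once that is in hand both parts are immediate from $\left<Y w, (Y^{-1})^*v\right>=\left<w,v\right>$ and the vanishing of the $n$-th coordinate of $P_{n-1}\zeta$.
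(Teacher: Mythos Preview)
Your argument for the first assertion is correct but proceeds along a different route from the paper. The paper does not identify $z_0$ explicitly as a constant multiple of $(Y(\theta)^{-1})^*e_n$; instead it introduces the auxiliary vector $\widehat{\zeta}=\left(\begin{smallmatrix}(I-e^{\Lambda T})^{-1}&0\\0&0\end{smallmatrix}\right)\zeta$ and uses Perron's lemma (constancy of $\langle y,z\rangle$ for solutions of (\ref{ls}) and (\ref{as}) together with $T$-periodicity of $z_0$) to obtain $\langle(Y(\theta)-Y(\theta+T))P_{n-1}\widehat{\zeta},z_0(\theta)\rangle=0$, and then expands $Y(\theta)-Y(\theta+T)$ via the Floquet block form so that the factor $(I-e^{\Lambda T})$ cancels against the one in $\widehat{\zeta}$, yielding $\langle Y(\theta)P_{n-1}\zeta,z_0(\theta)\rangle=0$. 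Your approach is shorter and more transparent: once one checks that $(Y(\theta)^{-1})^*e_n=(\Phi(\theta)^{-1})^*e_n$ is a nontrivial $T$-periodic solution of the adjoint (immediate from the block form), the orthogonality is a one-line computation. What the paper's argument buys is that it never needs to name $z_0$ concretely; what yours buys is that the mechanism is laid bare.

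For the second assertion you correctly observe that the statement as printed is vacuous in light of the first part, and you correctly guess the intended content: if $\langle\xi,z_0(\theta)\rangle=0$ then $\xi=Y(\theta)P_{n-1}\zeta$ for some $\zeta$ (for each fixed $\theta$). This is exactly what the paper proves and exactly how the lemma is invoked later (see the passage producing $l_*$ from $l_0$). The paper's proof of this part is a clean dimension count: set $L_\xi=\{\xi:\langle\xi,z_0(\theta)\rangle=0\}$ and $L_\zeta=Y(\theta)P_{n-1}\mathbb{R}^n$; both are $(n-1)$-dimensional, and $L_\zeta\subset L_\xi$ by the first assertion, so they coincide. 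Your sketch is the same idea expressed via annihilators; either works.
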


\begin{proof} Let $\zeta\in\mathbb{R}^n$ and define
\[
  \widehat{\zeta}=\left(\begin{array}{cc}\left(I-{\rm e}^{\Lambda
  T}\right)^{-1} & 0 \\ 0 & 0 \end{array}\right)\zeta.
\]
By Perron's lemma \cite{perron} we have
\[
  \left<Y(\theta+T)P_{n-1}\widehat{\zeta},z_0(\theta)\right>=\left<
  Y(\theta)P_{n-1}\widehat{\zeta},z_0(\theta)\right>\quad
  {\rm for\ any\ }\theta\in[0,T].
\]
Therefore
\begin{equation}
\begin{aligned}
0=&
\left<\left(Y(\theta)-Y(\theta+T)\right)P_{n-1}\widehat{\zeta},z_0(\theta)\right>\nonumber\\
=& \left<\Phi(\theta)\left(\begin{array}{cc} {\rm
e}^{\Lambda\theta}\left(I-{\rm e}^{\Lambda T}\right) & 0 \\ 0 & 0
\end{array}\right)P_{n-1}\widehat{\zeta},z_0(\theta)\right>\nonumber\\
=& \left<\Phi(\theta)\left(\begin{array}{cc} {\rm
e}^{\Lambda\theta} & 0 \\ 0 & 0
\end{array}\right)P_{n-1}\zeta,z_0(\theta)\right>=\left<Y(\theta)P_{n-1}
\zeta,z_0(\theta)\right>\quad{\rm for\ any\ }\theta\in[0,T].
\end{aligned}
\end{equation}
To prove the second assertion define
$$
  L_\xi=\left\{\xi\in\mathbb{R}^n:\left<\xi,z_0(\theta)\right>=0\right\},\quad
  L_\zeta=\bigcup_{\zeta\in\mathbb{R}^n}Y(\theta)P_{n-1}\zeta.
$$
$L_{\xi}$ and $L_{\zeta}$ are linear subspaces of $\mathbb{R}^n$
and ${\rm dim}L_\xi=n-1.$ Since, for any $\theta\in [0,T]$,
$Y(\theta)P_{n-1}$ is a linear nonsingular map acting from
$P_{n-1}\mathbb{R}^n$ to $Y(\theta)P_{n-1}\mathbb{R}^n,$ then
${\rm dim}L_\zeta={\rm dim}P_{n-1}\mathbb{R}^n=n-1.$ But by the
first assertion of the lemma $L_\xi\supset L_\zeta$ and thus we
can conclude that $L_\xi=L_\zeta.$

\end{proof}

\begin{lemma}\label{lemma2} For any $\Delta\in(0,\Delta_0]$
and any $\delta\in(0,\delta_0]$ we have that $\Gamma$ is a
homeomorphism of $B_\Delta(C_\delta)$ onto
$\Gamma(B_\Delta(C_\delta))$ provided that $\Delta_0>0$ and
$\delta_0>0$ are sufficiently small. Moreover, the set
$\Gamma(B_\Delta(C_\delta))$ is open in $\mathbb{R}^n$ and
$\Gamma^{-1}$ is continuously differentiable in
$\Gamma(B_\Delta(C_\delta)).$
 \end{lemma}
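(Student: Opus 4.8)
The plan is to deduce the lemma from the inverse function theorem. Concretely, I would show that, for $\Delta_0$ and $\delta_0$ small enough and every $\Delta\in(0,\Delta_0]$, $\delta\in(0,\delta_0]$, the map $\Gamma$ is of class $C^1$ on $B_\Delta(C_\delta)$, has a nonsingular differential at each point of this set, and is injective on it. The first two facts already force $\Gamma$ to be an open map and a local $C^1$-diffeomorphism, so that $\Gamma(B_\Delta(C_\delta))$ is open; adding injectivity turns $\Gamma$ into a $C^1$-diffeomorphism onto its image, whose inverse is then automatically $C^1$. Smoothness is immediate, since $Y$ is a fundamental matrix of (\ref{ls}), hence $C^1$, $x_0\in C^1$ and $P_{n-1}$ is linear; differentiating and writing $t=\zeta^n+\overline\theta$ one finds
\[
\frac{\partial\Gamma}{\partial\zeta^k}(\zeta)=\frac{Y(t)\,e_k}{\|Y\|_{M_T}}\quad(1\le k\le n-1),\qquad
\frac{\partial\Gamma}{\partial\zeta^n}(\zeta)=\frac{Y'(t)\,P_{n-1}\zeta}{\|Y\|_{M_T}}+\dot x_0(t),
\]
where $e_k$ denotes the $k$-th coordinate vector.

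Next I would verify nonsingularity of $D\Gamma$ on the compact ``axis'' $A=\{0\}^{n-1}\times[-\tfrac{\theta_2-\theta_1}{2},\tfrac{\theta_2-\theta_1}{2}]$, which $\Gamma$ maps onto the arc $x_0([\theta_1,\theta_2])$. On $A$ one has $P_{n-1}\zeta=0$, so the columns of $D\Gamma(\zeta)$ are the vectors $Y(t)e_k/\|Y\|_{M_T}$, $k=1,\dots,n-1$, spanning $Y(t)P_{n-1}\mathbb{R}^n$, together with $\dot x_0(t)$. The first $n-1$ of them are linearly independent because $Y(t)$ is invertible, and by the first assertion of \lemref{lemma1} their span $Y(t)P_{n-1}\mathbb{R}^n$ is contained in the hyperplane orthogonal to $z_0(t)$; on the other hand $\langle\dot x_0(t),z_0(t)\rangle$ is independent of $t$ (differentiate it and use (\ref{ls}), (\ref{as})) and nonzero by nondegeneracy of the cycle, so $\dot x_0(t)$ does not lie in that span. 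Hence $\det D\Gamma\ne 0$ on $A$, and since $\det D\Gamma$ is continuous on a neighbourhood of $A$ in $\mathbb{R}^n$ and $A$ is compact, it stays nonzero on some $\rho$-neighbourhood $B_\rho(A)$. Choosing $\delta_0,\Delta_0$ with $\delta_0+\Delta_0<\rho$ guarantees $B_\Delta(C_\delta)\subset B_\rho(A)$, so $D\Gamma$ is nonsingular throughout $B_\Delta(C_\delta)$; by the inverse function theorem $\Gamma$ is then an open map and a local $C^1$-diffeomorphism there, which already yields openness of $\Gamma(B_\Delta(C_\delta))$ and local smoothness of $\Gamma^{-1}$.

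The remaining, and in my view main, difficulty is the \emph{global} injectivity of $\Gamma$ on $B_\Delta(C_\delta)$. I would argue by contradiction and compactness: were injectivity to fail for arbitrarily small parameters, one would get $\delta_k\downarrow0$, $\Delta_k\downarrow0$ and $\zeta_1^{(k)}\ne\zeta_2^{(k)}$ in $B_{\Delta_k}(C_{\delta_k})$ with $\Gamma(\zeta_1^{(k)})=\Gamma(\zeta_2^{(k)})$. Since $B_{\Delta_k}(C_{\delta_k})$ collapses onto $A$, along a subsequence $\zeta_i^{(k)}\to\zeta_i^\ast\in A$, and passing to the limit gives $x_0(\zeta_1^{\ast,n}+\overline\theta)=x_0(\zeta_2^{\ast,n}+\overline\theta)$ with both arguments in $[\theta_1,\theta_2]$. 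As $\tfrac{T}{p}$ is the least period of $x_0$ and $\theta_2-\theta_1\le\tfrac{T}{p}$, the curve $x_0$ is injective on the arc corresponding to the interior of $A$, which forces $\zeta_1^\ast=\zeta_2^\ast=:\zeta^\ast$; then for $k$ large both $\zeta_1^{(k)},\zeta_2^{(k)}$ lie in a neighbourhood of $\zeta^\ast$ on which $\Gamma$ is injective, contradicting $\zeta_1^{(k)}\ne\zeta_2^{(k)}$. The delicate point is the interplay between the enlargement by $\Delta$ and the least period $\tfrac{T}{p}$: $\Delta_0$ must be chosen so small that the slightly enlarged $\zeta^n$-range of $B_\Delta(C_\delta)$ --- an interval only a little longer than $\theta_2-\theta_1$ --- still corresponds to an embedded piece of $x_0$ (the borderline case $\theta_2-\theta_1=\tfrac{T}{p}$, where $x_0(\theta_1)=x_0(\theta_2)$ and only the two endpoints of $A$ can conflict, being the most delicate), and the compactness argument must be made uniform in the two small parameters $\Delta$ and $\delta$ simultaneously.
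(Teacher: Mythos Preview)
Your approach is essentially the paper's: compute $\Gamma'$ along the axis $A$, show it is nonsingular there via \lemref{lemma1} and $\langle\dot x_0,z_0\rangle\ne0$, extend by continuity and apply the inverse function theorem for the local claims, then establish global injectivity by a sequential contradiction argument collapsing onto $A$. The one substantive difference is how the contradiction is closed once the two limit points on $A$ coincide: you invoke the local injectivity furnished by the inverse function theorem, whereas the paper carries out an explicit computation---using \lemref{lemma1} to rewrite $\langle x_0(a_k^n)-x_0(b_k^n),\,z_0(a_k^n)\rangle$ in terms of $(Y(b_k^n)-Y(a_k^n))P_{n-1}b_k$, dividing by $a_k^n-b_k^n$, and passing to the limit---to force $\langle\dot x_0(\theta_0),z_0(\theta_0)\rangle=0$. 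Your route is shorter and more conceptual; the paper's is more hands-on and makes a second, independent use of \lemref{lemma1}. Both are valid.

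Your flagged concern about the endpoint case $\theta_2-\theta_1=T/p$ is legitimate and not merely a matter of care in the write-up: in that case $x_0(\theta_1)=x_0(\theta_2)$, and for any $\Delta>0$ the points $(0,\dots,0,-\tfrac{\theta_2-\theta_1}{2}+s)$ and $(0,\dots,0,\tfrac{\theta_2-\theta_1}{2}+s)$ with $0<s<\Delta$ both lie in $B_\Delta(C_\delta)$ and have the same image under $\Gamma$, so injectivity genuinely fails. The paper's proof passes over this with the assertion ``$|a_0^n-b_0^n|<T/p$,'' which is not justified when equality is allowed in the hypothesis. In the paper's actual applications of Theorem~\ref{thm1} one has $\theta_2<\theta_1+T/p$ strictly, so the downstream results are unaffected; your caution here is well placed.
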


\begin{proof} Obviously $\Gamma$ is continuous. Let us show
that $\Gamma:B_\Delta(C_\delta)\to \Gamma(B_\Delta(C_\delta))$ is
injective for $\Delta>0$ and $\delta>0$ sufficiently small. For
this assume the contrary, thus there exist
$\{a_k\}_{k\in\mathbb{N}},\{b_k\}_{k\in\mathbb{N}}\subset\mathbb{R}^n,$
$a_k\not= b_k,$ $a_k\to a_0,$ $b_k\to b_0$ as $k\to\infty,$
\begin{equation}\label{Pform}P_{n-1}a_0=P_{n-1}b_0=0,
\end{equation} such that
\begin{equation}\label{contr}
\frac{Y(a_k^n)}{\|Y\|_{M_T}}P_{n-1}a_k +
x_0(a_k^n)=\frac{Y(b_k^n)} {\|Y\|_{M_T}}P_{n-1}b_k + x_0(b_k^n).
\end{equation}
Without loss of generality we may assume that either
$a^n_k=b^n_k$ for any $k\in\mathbb{N}$ or $a^n_k\not=b^n_k$ for
any $k\in\mathbb{N}.$ Assume that $a^n_k=b^n_k$ for any
$k\in\mathbb{N},$ thus
\[
  Y(a_k^n)(P_{n-1}a_k-P_{n-1}b_k)=0\quad{\rm for\ any\
  }k\in\mathbb{N},
\]
and so
\[ P_{n-1}a_k=P_{n-1}b_k\quad{\rm for\ any\
  }k\in\mathbb{N}, \]
contradicting the property that $a_k\not= b_k$ for any
$k\in\mathbb{N}.$ Consider now the case when $a^n_k\not=b^n_k$ for
any $k\in\mathbb{N},$ from (\ref{contr}) we have
$x_0(a_0^n)=x_0(b_0^n).$ Moreover, since
$0\le\theta_1<\theta_2\le\theta_1+\frac{T}{p},$ by our choice of
$\theta_1$ and $\theta_2$, for $\Delta>0$ and $\delta>0$
sufficiently small we have that $|a_0^n-b_0^n|<\frac{T}{p}$, where
$\frac{T}{p}$ is the least period of $x_0,$ thus
$a_0^n=b_0^n=:\theta_0.$ By using Lemma \ref{lemma1}, from
(\ref{contr}) we have
\[
  \left<x_0(a_k^n)-x_0(b_k^n),z_0(a_k^n)\right>=\left<\frac{Y(b_k^n)}
  {\|Y\|_{M_T}}P_{n-1}b_k^n,z_0(a_k^n)\right>=
  \left<\frac{Y(b_k^n)-Y(a_k^n)}{\|Y\|_{M_T}}P_{n-1}b_k^n,z_0(a_k^n)\right>,
\]
or equivalently, by dividing by $a_k^n-b_k^n$
\[
  \left<\frac{x_0(a_k^n)-x_0(b_k^n)}{a_k^n-b_k^n},z_0(a_k^n)\right>=-
  \frac{1}{\|Y\|_{M_T}}\left<\frac{Y(a_k^n)-Y(b_k^n)}{a_k^n-b_k^n}
  P_{n-1}b_k^n,z_0(a_k^n)\right>.
\]
By passing to the limit as $k\to\infty$ in the previous equality
and by taking into account that $P_{n-1}b_k^n\to 0$ as
$k\to\infty$ we obtain
\[
\left<\dot x_0(\theta_0),z_0(\theta_0)\right>=0
\]
which is a contradiction, see e.g. (\cite{malb}, formula~12.9
Chap.~III). Therefore, there exist $\Delta_0>0$ and $\delta_0>0$
such that $\Gamma:B_\Delta(C_\delta)\to
\Gamma(B_\Delta(C_\delta))$ is injective for
$\Delta\in(0,\Delta_0]$ and $\delta\in(0,\delta_0].$ Let us show
that $\Delta_0>0$ and $\delta_0>0$ can be chosen also in such a
way that
\begin{equation}\label{need2}
\Gamma(B_\Delta(C_\delta)) {\rm\ \ is\ open\ in\ }
\mathbb{R}^n{\rm\  for\ any\ } \Delta\in(0,\Delta_0] {\rm\  and\
any\ }\delta\in(0,\delta_0].
\end{equation}
Observe that for any $\zeta\in\mathbb{R}^n$ satisfying
$P_{n-1}\zeta=0$ we have
\[\Gamma'(\zeta)=\frac{1}{\|Y\|_{M_T}}\Phi\left(\zeta^n+\overline
{\theta}\right)\left(\begin{array}{cc} {\rm
e}^{\Lambda(\zeta^n+\overline{\theta})} & 0 \\ 0 & 0
\end{array}\right)+
\left(0\ \ldots\ 0\ \ \dot x_0(\zeta^n+\overline{\theta})\right)
\]
and so for any $\zeta\in\mathbb{R}^n$ such that $P_{n-1}\zeta=0$
the derivative $\Gamma'(\zeta)$ is invertible. Therefore, without
loss of generality, we may consider $\Delta_0>0$ and $\delta_0>0$
sufficiently small to have that $\Gamma'(\zeta)$ is invertible for
any $\zeta\in B_\Delta(C_\delta)$ with $\Delta\in(0,\Delta_0]$ and
$\delta\in(0,\delta_0].$ By the inverse map theorem, see e.g.
(\cite{rud}, Theorem 9.17) we have that $\Gamma$ is locally
invertible in $B_\Delta(C_\delta)$ with $\Delta\in(0,\Delta_0]$
and $\delta\in(0,\delta_0],$ which implies that it maps any
sufficiently small neighborhood of $\zeta$ in $\mathbb{R}^n$ into
an open set of $\mathbb{R}^n,$ which in turn implies
(\ref{need2}). Moreover, from the inverse map theorem we have also
that $\Gamma^{-1}$ is continuously differentiable in
$\Gamma(B_\Delta(C_\delta)).$
\end{proof}
We can now prove Theorem \ref{thm1}.
\begin{proof}  First of all observe
that if $x$ is a solution of the equation $x=Q_\varepsilon x$ then
$u(t)=x^{-1}(t,x(t))$ is a solution of the equation
$u=G_\varepsilon u,$ see e.g. (\cite{non}, formulas (13)-(19)),
where $G_\varepsilon:C([0,T],\mathbb{R}^n)\to
C([0,T],\mathbb{R}^n)$ is defined as follows
\[
  (G_\varepsilon
  u)(t)=x(T,u(T))+\varepsilon\int_0^t
  \left(x'_{(2)}(\tau,u(\tau))\right)^{-1}\phi(\tau,x(\tau,u(\tau)),
  \varepsilon)d\tau.
\]
Moreover, since for any open set $V\subset\mathbb{R}^n$ the
homeomorphism $(M x)(t)=x^{-1}(t,x(t))$ maps every neighborhood of
$W_V$ onto a neighborhood of the set
\[
  \widehat{W}_V=\left\{u\in C([0,T],\mathbb{R}^n):u(t)\in V,\ {\rm for\ any\
  }t\in[0,T]\right\},
\]
then by (\cite{krazab}, Theorem 26.4) we have that
\[
  d(I-Q_\varepsilon,W_{\Gamma(C_\delta)})=d(I-G_\varepsilon,
  \widehat{W}_{\Gamma(C_\delta)})
\]
provided that $d(I-G_\varepsilon,\widehat{W}_{\Gamma(C_\delta)})$
is defined. To show that
$d(I-G_\varepsilon,\widehat{W}_{\Gamma(C_\delta)})$ is defined and
to evaluate it, we introduce the vector field $A_\varepsilon:
\Gamma(B_\Delta(C_\delta)) \to\mathbb{R}^n$ as follows
\[
A_\varepsilon(\xi)=x'_{(2)}\left(T-\varepsilon
f\left(\left[\Gamma^{-1}(\xi)\right]^n\right),
x_0\left(\left[\Gamma^{-1}
  (\xi)\right]^n+\overline{\theta}\right)
  \right)\left(\xi-x_0\left(\left[\Gamma^{-1}(\xi)\right]^n+
  \overline{\theta}\right)\right)+
\]
\[
  +\,x_0\left(\left[\Gamma^{-1}(\xi)\right]^n+\overline{\theta}-
  \varepsilon f\left(   \left[\Gamma^{-1}(\xi)\right]^n
  \right)\right),
\]
where $\Gamma, \Delta, \delta>0$ are given by Lemma  \ref{lemma2}
and $f:\mathbb{R}\to\mathbb{R}$ is defined as
\begin{equation}\label{functionf}
f(t)=\left\{   \begin{array}{cl} \ \ |t|, & {\rm \ \ if\ }
f_{x_0}(\theta_1)<0{\rm\ and\ }f_{x_0}(\theta_2)<0, \\
-|t|, & {\rm \ \ if\ } f_{x_0}(\theta_1)>0{\rm\ and\
}f_{x_0}(\theta_2)>0, \\
-\,d_{\mathbb{R}}\left(f_{x_0},\left(\theta_1,\theta_2\right)\right)
\cdot t, & {\rm \ \ otherwise.}
\end{array} \right.
\end{equation}
We now prove that there exists $\delta_0>0$ such that for any
$\delta\in(0,\delta_0]$ and any
$\varepsilon\in(0,\delta^{1+\alpha}]$ both the topological degrees
$d(I-G_\varepsilon,\widehat{W}_{\Gamma(C_{\delta})})$ and
$d_{\mathbb{R}^n}(I-A_\varepsilon,\Gamma(C_{\delta}))$ are defined
and
\begin{equation}\label{co2}
  d(I-G_\varepsilon,\widehat{W}_{\Gamma(C_{\delta})})=d_{\mathbb{R}^n}
  (I-A_\varepsilon,\Gamma(C_{\delta})).
\end{equation}
To do this we introduce an auxiliary vector field
$\widehat{A}_\varepsilon:C([0,T],\mathbb{R}^n)\to
C([0,T],\mathbb{R}^n)$ by letting $(\widehat{A}_\varepsilon
u)(t)={A}_\varepsilon (u(T))$ for any $t\in[0,T]$ and any $u\in
C([0,T],\mathbb{R}^n).$ Since
$\widehat{W}_{\Gamma(C_{\delta})}\cap\mathbb{R}^n=\Gamma(C_{\delta}),$
by the reduction theorem for the topological degree, see e.g.
(\cite{krazab}, Theorem 27.1),
$d_{\mathbb{R}^n}(I-A_\varepsilon,\Gamma(C_{\delta}))$ is defined
provided that
$d(I-\widehat{A}_\varepsilon,\widehat{W}_{\Gamma(C_{\delta})})$ is
defined, moreover
$d_{\mathbb{R}^n}(I-A_\varepsilon,\Gamma(C_{\delta}))=
d(I-\widehat{A}_\varepsilon,\widehat{W}_{\Gamma(C_{\delta})}).$
Hence, we now show that there exists $\delta_0>0$ such that for
any $\delta\in(0,\delta_0]$ and any
$\varepsilon\in(0,\delta^{1+\alpha}]$ both the Leray-Schauder
topological degrees
$d(I-G_\varepsilon,\widehat{W}_{\Gamma(C_{\delta})})$ and
$d(I-\widehat{A}_\varepsilon,\widehat{W}_{\Gamma(C_{\delta})})$
are defined and
\begin{equation}\label{co2p}
d(I-G_\varepsilon,\widehat{W}_{\Gamma(C_{\delta})})=
d(I-\widehat{A}_\varepsilon,\widehat{W}_{\Gamma(C_{\delta})}).
\end{equation}
To prove (\ref{co2p}) let $F_\varepsilon:C([0,T],\mathbb{R}^n)\to
C([0,T],\mathbb{R}^n)$ be the operator given by
\[({F_\varepsilon}u)(t)=\int_0^t
  \left(x'_{(2)}(\tau,u(\tau))\right)^{-1}\phi(\tau,x(\tau,u(\tau)),
  \varepsilon)d\tau \quad \mbox{for any} \; t\in[0,T],\]
and introduce the linear deformation
\[
  D_\varepsilon\big(\lambda,u)(t)=\lambda\Big(u(t)-x(T,u(T))-\varepsilon
  ({F_\varepsilon}u)(t)\Big)+(1-\lambda)\left(u(t)-
  \left(\widehat{A}_\varepsilon u\right)(t)\right),
\]
where $\lambda\in[0,1],\  u\in\partial
\widehat{W}_{\Gamma(C_\delta)},\
  \delta\in(0,\delta_0)$. Equivalently,
\begin{equation}
\begin{aligned}
D_\varepsilon(\lambda,u)(t)=&\lambda\Big(u(t)-x(T,u(T))\Big)+
(1-\lambda)u(t)\nonumber\\
   &-(1-\lambda)x'_{(2)}\left(T-\varepsilon f\left(\left[\Gamma^{-1}(u(T))\right]^n\right),
   \mathcal{P}_{x_0}(u(T))\right)(u(T)-\mathcal{P}_{x_0}(u(T)))\nonumber\\
   &- \lambda\varepsilon({F_\varepsilon}u)(t)-(1-\lambda)\,x_0\left(\left[\Gamma^{-1}(u(T))\right]^n+
  \overline{\theta}-\varepsilon f\left(   \left[\Gamma^{-1}(u(T))\right]^n
  \right)\right),
\end{aligned}
\end{equation}
where $\lambda\in[0,1],\ u\in\partial
\widehat{W}_{\Gamma(C_\delta)},\ \delta\in(0,\delta_0)$ and
$$\mathcal{P}_{x_0}(\xi)=x_0\left(\left[\Gamma^{-1}(\xi)\right]^n+\overline{\theta}\right).$$
We show that for all sufficiently small $\delta\in(0,\delta_0]$
and $\varepsilon\in(0,\delta^{1+\alpha}]$ we have that
$D_\varepsilon(\lambda,u)\not=0$ for any $\lambda\in[0,1]$ and any
$u\in\partial \widehat{W}_{\Gamma(C_{\delta})}.$ Assume the
contrary, thus there exist
$\{\delta_k\}_{k\in\mathbb{N}}\subset\mathbb{R}_+,$ $\delta_k\to
0$ as $k\to\infty,$ $\{\varepsilon_k\}_{k\in\mathbb{N}},$
$\varepsilon_k\in(0,\delta_k^{1+\alpha}),$
$\{u_k\}_{k\in\mathbb{N}},$ $u_k\in\partial
\widehat{W}_{\Gamma(C_{\delta_k})},$
 $\{\lambda_k\}_{k\in\mathbb{N}}\subset[0,1]$ such that
\begin{equation}
\begin{aligned}
0=\, &\lambda_k\Big(u_k(t)-x(T,u_k(T))\Big)+(1-\lambda_k)u_k(t)
\\ & -(1-\lambda_k)x'_{(2)}\left(T-\varepsilon_k f\left(\left[\Gamma^{-1}(u_k(T))\right]^n\right),
\mathcal{P}_{x_0}(u_k(T))\right)(u_k(T)-\mathcal{P}_{x_0}(u_k(T)))
\\ & -\lambda_k\varepsilon_k({F_{\varepsilon_k}}u_k)(t)-(1-\lambda_k)
x_0\left(\left[\Gamma^{-1}(u_k(T))\right]^n+\overline{\theta}-
\varepsilon_k f\left(   \left[\Gamma^{-1}(u_k(T))\right]^n
\right)\right).\label{fir}
\end{aligned}
\end{equation}
From (\ref{fir}) we have
\begin{equation}
\begin{aligned}
  u_k(t)=&\lambda_k x(T,u_k(T))\nonumber\\&+(1-\lambda_k)x'_{(2)}
  \left(T-\varepsilon_k f\left(\left[\Gamma^{-1}(u_k(T))\right]^n\right),\mathcal{P}_{x_0}(u_k(T))\right)
  (u_k(T)-\mathcal{P}_{x_0}(u_k(T)))\nonumber\\
  & +\lambda_k\varepsilon_k({F_{\varepsilon_k}}u_k)(t)+(1-\lambda_k)
  x_0\left(\left[\Gamma^{-1}(u_k(T))\right]^n+\overline{\theta}-
  \varepsilon_k f\left(   \left[\Gamma^{-1}(u_k(T))\right]^n
  \right)\right)\nonumber
\end{aligned}
\end{equation}
and therefore
\begin{equation}\label{cons}
\dot u_k(t)=\lambda_k\varepsilon_k
\left(x'_{(2)}(t,u_k(t))\right)^{-1}\phi(t,x(t,u_k(t)),\varepsilon_k).
\end{equation}
It follows from (\ref{cons}) that without loss of generality we
may assume that there exists $\xi_0\in\mathbb{R}^n$ such that
\[
  u_k(t)\to \xi_0{\ \rm as\ }k\to\infty
\]
uniformly with respect to $t\in[0,T].$ Since
$u_k(0)\in\Gamma(C_{\delta_k})\in
B_{\delta_k}(x_0([\theta_1,\theta_2]))$ then $\xi_0\in
x_0([\theta_1,\theta_2]).$ Now, to get a contradiction, take $t=T$
and rewrite (\ref{fir}) as follows
\begin{equation}\label{kmn}
\begin{aligned}
     0=\,&\lambda_k\left(u_k(T)-x(T,u_k(T))\right)+(1-\lambda_k)u_k(T)\\
    &-(1-\lambda_k)x'_{(2)}\left(T-\varepsilon_k f\left(\left[\Gamma^{-1}(u_k(T))\right]^n\right),
   \mathcal{P}_{x_0}(u_k(T))\right)(u_k(T)-\mathcal{P}_{x_0}(u_k(T)))\\
    &-\lambda_k\varepsilon_k({F_{\varepsilon_k}}u_k)(T)-(1-\lambda_k)x_0
  \left(\left[\Gamma^{-1}(u_k(T))\right]^n+\overline{\theta}-
  \varepsilon_k f\left(   \left[\Gamma^{-1}(u_k(T))\right]^n
  \right)\right)\\
   =\,&\lambda_k\left(u_k(T)-x(T,u_k(T))\right)+(1-\lambda_k)
 \left(I-x'_{(2)}\left(T-\varepsilon_k f\left(\left[\Gamma^{-1}(u_k(T))\right]^n\right),
 \mathcal{P}_{x_0}(u_k(T))\right)\right)\\
  &\cdot (u_k(T)-\mathcal{P}_{x_0}(u_k(T)))-\lambda_k\varepsilon_k({F_{\varepsilon_k}}u_k)(T)+(1-\lambda_k)
  \mathcal{P}_{x_0}(u_k(T))\\
   &-(1-\lambda_k)x_0\left(\left[\Gamma^{-1}(u_k(T))\right]^n+
  \overline{\theta}-\varepsilon_k f\left(\left[\Gamma^{-1}(u_k(T))\right]^n
  \right)\right)=\lambda_k\left(u_k(T)-x(T,u_k(T))\right) \\
   &+(1-\lambda_k)
  \left(I-x'_{(2)}\left(T-\varepsilon_k f\left(\left[\Gamma^{-1}(u_k(T))\right]^n\right),
  \mathcal{P}_{x_0}(u_k(T))\right)\right)(u_k(T)-\mathcal{P}_{x_0}(u_k(T)))\\
   &-\lambda_k\varepsilon_k ({F_{\varepsilon_k}}u_k)(T)+\varepsilon_k(1-\lambda_k)
  \dot x_0\left(\left[\Gamma^{-1}(u_k(T))\right]^n+\overline{\theta}\right)
  f\left(   \left[\Gamma^{-1}(u_k(T) )\right]^n
  \right)+o(\varepsilon_k).
\end{aligned}
\end{equation}
Now, observing that
\begin{equation}
\begin{aligned}
x(T,\xi)-\xi
&= x(T,\xi)-\mathcal{P}_{x_0}(\xi)+\mathcal{P}_{x_0}(\xi)-\xi\nonumber\\
&=x(T,\mathcal{P}_{x_0}(\xi)+(\xi-\mathcal{P}_{x_0}(\xi)))-\mathcal{P}_{x_0}(\xi)+
\mathcal{P}_{x_0}(\xi)-\xi\nonumber\\
&=x'_{(2)}(T,\mathcal{P}_{x_0}(\xi))(\xi-\mathcal{P}_{x_0}(\xi))-(\xi-\mathcal{P}_{x_0}(\xi))+
o(\xi-\mathcal{P}_{x_0}(\xi))\nonumber\\
&=\left(x'_{(2)}(T,\mathcal{P}_{x_0}(\xi))-I\right)(\xi-\mathcal{P}_{x_0}(\xi))
+o(\xi-\mathcal{P}_{x_0}(\xi)),
\end{aligned}
\end{equation}
from (\ref{kmn}) we obtain
\begin{equation}\label{cc}
\begin{aligned}
  & \lambda_k\left(I-x'_{(2)}(T,\mathcal{P}_{x_0}(u_k(T)))\right)(u_k(T)-\mathcal{P}_{x_0}(u_k(T)))
-\lambda_k \, o(u_k(T)-\mathcal{P}_{x_0}(u_k(T)))
\\
  & +(1-\lambda_k)
  \left(I-x'_{(2)}\left(T-\varepsilon_k f\left(\left[\Gamma^{-1}(u_k(T))\right]^n\right),
  \mathcal{P}_{x_0}(u_k(T))\right)\right)(u_k(T)-\mathcal{P}_{x_0}(u_k(T)))
\\
  & -\lambda_k\varepsilon_k ({F_{\varepsilon_k}}u_k)(T)+\varepsilon_k(1-\lambda_k)
  \dot x_0\left(\left[\Gamma^{-1}(u_k(T))\right]^n+\overline{\theta}\right)
  f\left(   \left[\Gamma^{-1}(u_k(T) )\right]^n
  \right)+o(\varepsilon_k)=0.
\end{aligned}
\end{equation}
We may assume that the sequences $\{\lambda_k\}_{k\in\mathbb{N}}$
and
$\left\{\dfrac{u_k(T)-\mathcal{P}_{x_0}(u_k(T))}{\|u_k(T)-\mathcal{P}_{x_0}(u_k(T))\|}
\right\}_{k\in\mathbb{N}}$ converge, let
$\lambda_0=\lim_{k\to\infty}\lambda_k$ and
$l_0=\lim_{k\to\infty}\dfrac{u_k(T)-\mathcal{P}_{x_0}(u_k(T))}{\|u_k(T)-\mathcal{P}_{x_0}(u_k(T))\|}.$
Since $u_k\in\partial\widehat{W}_{\Gamma(C_{\delta_k})}$ then
there exists $t_k\in[0,T]$ such that $u_k(t_k)\in\partial
\Gamma(C_{\delta_k}).$ Let $\zeta_k=\Gamma^{-1}(u_k(t_k)),$
without loss of generality we may assume that either
\begin{equation}\label{case2}
\zeta_k^n+\overline{\theta}\in(\theta_1,\theta_2)\quad{\rm for\
any} \; k\in\mathbb{N}
\end{equation}
or
\begin{equation}\label{case1}
  \zeta_k^n+\overline{\theta}\in\{\theta_1\}\cup\{\theta_2\}\quad{\rm for\ any}
\;k\in\mathbb{N}.
\end{equation}

\vskip0.2truecm \noindent Let us show that (\ref{case2}) cannot
occur. By Lemma \ref{lemma2}, $\Gamma$ is  a homeomorphism of
$B_\Delta(C_{\delta_k})$ onto $\Gamma(B_\Delta(C_{\delta_k}))$ for
sufficiently small $\Delta>0$ and
$u_k(t_k)\in\partial\Gamma(C_{\delta_k})$ then we have
\begin{equation}\label{tm}
\zeta_k=\Gamma^{-1}(u_k(t_k))\in\partial C_{\delta_k}.
\end{equation}
Hence (\ref{case2}) and (\ref{tm}) imply
\begin{equation}\label{imp}
  \|P_{n-1}\zeta_k\|=\delta_k\quad{\rm for\ any\
  }k\in\mathbb{N}.
\end{equation}
Since
\[
  \|P_{n-1}\zeta_k\|=\|Y^{-1}(\theta)Y(\theta)P_{n-1}\zeta_k\|\le\|Y^{-1}
  (\theta)\| \|Y(\theta)P_{n-1}\zeta_k\|
\]
then there exists $c>0$ such that
\[
  \|Y(\theta)P_{n-1}\zeta_k\|\ge c\|P_{n-1}\zeta_k\|=c\delta_k
\]
for any $\theta\in [0,T]$, and so we have
\begin{equation}\label{he1}
 \|u_k(t_k)-\mathcal{P}_{x_0}(u_k(t_k))\|=\|\Gamma(\zeta_k)-x_0(\zeta_k^n+
 \overline{\theta})\|=
 \left\|
 Y\left(\zeta_k^n+\overline{\theta}\right)P_{n-1}\zeta_k
 \right\|\ge c\delta_k
\end{equation}
for any $k\in\mathbb{N}$. On the other hand from (\ref{cons}) we
have that there exists $c_1>0$ such that
\begin{equation}\label{he2}
  \|u_k(T)-u_k(t_k)\|\le c_1\varepsilon_k\quad {\rm for\ any \
  }k\in\mathbb{N}.
\end{equation}
Finally, from Lemma \ref{lemma2} we have that
$x_0\left(\left[\Gamma^{-1}(\cdot)\right]^n+\overline{\theta}\right)$
is continuously differentiable and so by taking into account
(\ref{he2}) there exists $c_2>0$ such that
\begin{equation}
\begin{aligned}
  &\|\mathcal{P}_{x_0}(u_k(T))-\mathcal{P}_{x_0}(u_k(t_k))\|\\
  &=\left\|  x_0\left(\left[\Gamma^{-1}(u_k(T))\right]^n+\overline{\theta}\right)-x_0\left
  (\left[\Gamma^{-1}(u_k(t_k))\right]^n+\overline{\theta}\right)\right\|
\\ & \le  c_2\|u_k(T)-u_k(t_k)\|\le c_1 c_2\varepsilon_k\quad {\rm for\ any \
  }k\in\mathbb{N}.\label{he3}
\end{aligned}
\end{equation}
We are now in a position to estimate
$\|u_k(T)-\mathcal{P}_{x_0}(u_k(T))\|$ from below. We have
\begin{equation}
\begin{aligned}
& \|u_k(T)-\mathcal{P}_{x_0}(u_k(T))\|
\\&=\|u_k(t_k)-\mathcal{P}_{x_0}(u_k(t_k))+u_k(T)-
u_k(t_k)-(\mathcal{P}_{x_0}(u_k(T))-\mathcal{P}_{x_0}
(u_k(t_k)))\|\\
&\ge\left|\|u_k(t_k)-\mathcal{P}_{x_0}(u_k(t_k))\|-\|u_k(T)-u_k(t_k)-(\mathcal{P}_{x_0}(u_k(T))-
\mathcal{P}_{x_0}(u_k(t_k)))\|\right|\label{he5}.
\end{aligned}
\end{equation}
Since $\varepsilon_k\in(0,\delta_k^{1+\alpha})$  there exists
$k_0\in\mathbb{N}$ such that $c_1\varepsilon_k+c_1
c_2\varepsilon_k<c\delta_k$ for all $k\ge k_0.$ Therefore, from
(\ref{he2}) and (\ref{he3}) we have
\begin{equation}\label{he4}
\|u_k(T)-u_k(t_k)-(\mathcal{P}_{x_0}(u_k(T))-\mathcal{P}_{x_0}(u_k(t_k)))\|\le
 c_1\varepsilon_k+c_1
c_2\varepsilon_k< c\delta_k,
\end{equation}
for any $k\ge k_0$. By using (\ref{he1}) and (\ref{he4}) we may
rewrite (\ref{he5}) as follows
\begin{equation}
\begin{aligned}
&\|u_k(T)-\mathcal{P}_{x_0}(u_k(T))\|\\
&\ge \|u_k(t_k)-\mathcal{P}_{x_0}(u_k(t_k))\|
\\&-\|u_k(T)-u_k(t_k)-(\mathcal{P}_{x_0}(u_k(T))-
\mathcal{P}_{x_0}(u_k(t_k)))\|\label{he6}
\end{aligned}
\end{equation}
and so
\[
  \|u_k(T)-\mathcal{P}_{x_0}(u_k(T))\|\ge c\delta_k - c_1\varepsilon_k-c_1
c_2\varepsilon_k\quad{\rm for\ any\ }k\ge k_0.
\]
By using this inequality  we obtain for any $k\ge k_0$
\begin{equation}\label{pro}
\begin{aligned}
  & \frac{\varepsilon_k}{\|u_k(T)-\mathcal{P}_{x_0}(u_k(T))\|}\le
  \frac{\varepsilon_k}{c\delta_k - c_1\varepsilon_k-c_1
  c_2\varepsilon_k}\le
\\ & \le \frac{\delta_k^{1+\alpha}}{c\delta_k- c_1
\delta_k^{1+\alpha}-c_1 c_2
\delta_k^{1+\alpha}}=\frac{\delta_k^\alpha}{c-c_1
\delta_k^\alpha-c_1 c_2 \delta_k^\alpha}.
\end{aligned}
\end{equation}
Using (\ref{pro}) and passing to the limit as $k\to\infty$ in
(\ref{cc}) divided by $\|u_k(T)-\mathcal{P}_{x_0}(u_k(T))\|$ we
get
\begin{equation}\label{res}
  \left(I-x'_{(2)}(T,x_0(\zeta_0^n+\overline{\theta}))\right)l_0=0.
\end{equation}
In order to prove that (\ref{res}) leads to a contradiction we now
show that
\begin{equation}\label{lus}
  \left<(I-x'_{(2)}(T,\xi_0))l_0,z_0\left(\left[\Gamma^{-1}(\xi_0)\right]^n+
  \overline{\theta}\right)\right>=0.
\end{equation}
Indeed
\begin{equation}
\begin{aligned}
 & \left<\frac{u_k(T)-\mathcal{P}_{x_0}(u_k(T))}{\|u_k(T)-\mathcal{P}_{x_0}(u_k(T))\|},z_0
  \left(\left[\Gamma^{-1}(u_k(T))\right]^n+\overline{\theta}\right)\right>\nonumber\\
&= \frac{1}{\|u_k(T)-\mathcal{P}_{x_0}(u_k(T))\|}
   \left<\Gamma(\Gamma^{-1}(u_k(T)))-x_0\left(\left[\Gamma^{-1}(u_k(T))\right]^n+
   \overline{\theta}\right),z_0\left(\left[\Gamma^{-1}(u_k(T))\right]^n+
   \overline{\theta}\right)\right>\nonumber\\
 &= \frac{1}{\|u_k(T)-\mathcal{P}_{x_0}(u_k(T))\|}\left<Y\left(\left[\Gamma^{-1}
  (u_k(T))\right]^n+\overline{\theta}\right)P_{n-1}\Gamma^{-1}(u_k(T)),z_0
  \left(\left[\Gamma^{-1}(u_k(T))\right]^n+\overline{\theta}\right)\right>,
\end{aligned}
\end{equation}
and so by Lemma  \ref{lemma1} we can conclude that
\begin{equation}\label{willu}
  \left<\frac{u_k(T)-\mathcal{P}_{x_0}(u_k(T))}{\|u_k(T)-\mathcal{P}_{x_0}(u_k(T))\|},z_0
  \left(\left[\Gamma^{-1}(u_k(T))\right]^n+\overline{\theta}\right)\right>=
  0\quad{\rm for\ any\ }k\in\mathbb{N}.
\end{equation}
By the definition of the vector $l_0$ from (\ref{willu}), passing
to the limit as $k\to\infty$, we obtain
\begin{equation}\label{willu1}
  \left<l_0,z_0\left(\zeta^n_0+\overline{\theta}\right)\right>=0.
\end{equation}
Since $\|l_0\|=1$ and so $l_0\not=0$, from Lemma \ref{lemma1} we
have that there exists $l_*\not=0$ such that
\begin{equation}\label{lstar}
l_0=Y\left(\zeta^n_0+\overline{\theta}\right)P_{n-1}l_*\quad{\rm
and}\quad P_{n-1}l_*=l_*,
\end{equation}
observing that, see e.g.  (\cite{kraop}, Theorem 2.1),
\begin{equation}\label{xylink}
  x'_{(2)}(t,x_0(\tau))=Y(t+\tau)Y^{-1}(\tau),\quad {\rm for\ any\
  }t,\tau\in\mathbb{R}
\end{equation}
we have
\begin{equation}\label{abc}
\begin{aligned}
   &
  \left(I-x'_{(2)}(T,x_0(\zeta_0^n+\overline{\theta}))\right)l_0= \left(I-Y\left(T+\zeta_0^n+\overline{\theta}\right)
   Y^{-1}\left(\zeta_0^n+\overline{\theta}\right)\right)l_0
\\
    &= \left(Y\left(\zeta_0^n+\overline{\theta}\right)-Y\left(T+\zeta_0^n+\overline{\theta}\right)
   \right)P_{n-1}l_*
\\
   &= \Phi\left(\zeta_0^n+\overline{\theta}\right)\left(\left(
    \begin{array}{ll} {\rm
    e}^{\Lambda(\zeta_0^n+\overline{\theta})}& 0 \\
    0 & 1\end{array}\right)-\left(\begin{array}{ll} {\rm
    e}^{\Lambda(T+\zeta_0^n+\overline{\theta})}& 0 \\
    0 & 1\end{array}\right)\right)P_{n-1}l_*\\
    &= \Phi\left(\zeta_0^n+\overline{\theta}\right)
    \left(
    \begin{array}{ll} {\rm
    e}^{\Lambda(\zeta_0^n+\overline{\theta})} (I-{\rm e}^{\Lambda T})& 0 \\
    0 & 0\end{array}\right)P_{n-1}l_*
\end{aligned}
\end{equation}
contradicting (\ref{res}).

\vskip0.2truecm \noindent Let us now show that (\ref{case1}) also
cannot occur. Firstly observe that if, passing to a subsequence if
necessary, we have that
$\dfrac{\varepsilon_k}{\|u_k(T)-\mathcal{P}_{x_0}(u_k(T))\|} \to
0$ then we can proceed as before to obtain again (\ref{res}) and
so a contradiction. Therefore, consider the case when
$\dfrac{\varepsilon_k}{\|u_k(T)-\mathcal{P}_{x_0}(u_k(T))\|} \to
l$, with $l>0$ or $l=+\infty.$ From (\ref{cc}) we have that
\begin{equation}\label{caca}
\begin{aligned}
& \frac{\varepsilon_k}{\|u_k(T)-\mathcal{P}_{x_0}(u_k(T))\|
}\left<\Xi_k(x_0)(T),z_0\left(\left[\Gamma^{-1}(u_k(T))\right]^n+\overline{\theta}\right)\right>
\\
&=\left<\Upsilon_k(x_0)(T),z_0\left(\left[\Gamma^{-1}(u_k(T))\right]^n+
\overline{\theta}\right)\right>,
\end{aligned}
\end{equation}
where
\[
 \Xi_k(x_0)(T):=\lambda_k
({F_{\varepsilon_k}}u_k)(T)-(1-\lambda_k)\dot x_0
  \left(\left[\Gamma^{-1}(u_k(T))\right]^n+\overline{\theta}\right)
  f\left(   \left[\Gamma^{-1}(u_k(T) )\right]^n
  \right)+\frac{o(\varepsilon_k)}{\varepsilon_k},
\]
\begin{equation}
\begin{aligned}
\Upsilon_k(x_0)(T):=&
\lambda_k\left(I-x'_{(2)}(T,\mathcal{P}_{x_0}(u_k(T)))\right)\frac{u_k(T)-\mathcal{P}_{x_0}(u_k(T)))}
{\|u_k(T)-\mathcal{P}_{x_0}(u_k(T))\|} -\nonumber\\
 &-\lambda_k\frac{o(u_k(T)-\mathcal{P}_{x_0}(u_k(T)))}{\|u_k(T)-\mathcal{P}_{x_0}(u_k(T))\|}+
(1-\lambda_k)\frac{u_k(T)-\mathcal{P}_{x_0}(u_k(T))}
{\|u_k(T)-\mathcal{P}_{x_0}(u_k(T))\|}\cdot\nonumber\\
&\cdot \left(I-x'_{(2)}\left(T-\varepsilon_k
f\left(\left[\Gamma^{-1}(u_k(T))\right]^n\right),\mathcal{P}_{x_0}(u_k(T))\right)\right).
\end{aligned}
\end{equation}
By using (\ref{lstar}), (\ref{abc}) and Lemma \ref{lemma1} we
obtain
\[
\left<\left(I-x'_{(2)}\left(T,x_0\left(\zeta_0^n+\overline{\theta}\right)\right)\right)l_0,
    z_0\left(\zeta_0^n+\overline{\theta}\right)\right>=
    \left<Y\left(\zeta_0^n+\overline{\theta}\right)\left(I-{\rm e}^{\Lambda T}\right)
    P_{n-1}l_*,
    z_0\left(\zeta_0^n+\overline{\theta}\right)\right>
\]
\[
    =\left<Y\left(\zeta_0^n+\overline{\theta}\right)
    P_{n-1}\left(I-{\rm e}^{\Lambda T}\right)l_*,
    z_0\left(\zeta_0^n+\overline{\theta}\right)\right>=0.
\]
Therefore
$$
\left<\Upsilon_k(x_0)(T),z_0\left
  (\left[\Gamma^{-1}(u_k(T))\right]^n+\overline{\theta}\right)\right>\
  \to 0\quad{\rm as\ }k\to\infty
$$
and from (\ref{caca}) we conclude that
\[
\left<\Xi_k(x_0)(T),z_0\left(\left[\Gamma^{-1}(u_k(T))\right]^n+
\overline{\theta}\right)\right> \to 0\quad{\rm as\ }k\to\infty
\]
which imply
\begin{equation}\label{fa}
  \left<
\lambda_0\widehat{
F}\left(x_0\left(\zeta^n_0+\overline{\theta}\right)\right)-(1-\lambda_0)\dot
x_0\left(\zeta^n_0+\overline{\theta}\right)
  f\left(   \zeta^n_0
  \right),z_0\left(\zeta^n_0+\overline{\theta}\right)\right>=0,
\end{equation}
where
\[
\widehat {F}(\xi)=\int\limits_0^T
  \left(x'_{(2)}(\tau,\xi)\right)^{-1}\phi(\tau,x(\tau,\xi),0)d\tau.
\]
By Perron's lemma we have
\[
\left< \dot x_0\left(\zeta^n_0+\overline{\theta}\right)
  f\left(   \zeta^n_0
  \right),z_0\left(\zeta^n_0+\overline{\theta}\right)\right>=
\left<\dot x_0(0),z_0(0)\right>f\left(   \zeta^n_0
  \right)
\]
and so (\ref{fa}) can be rewritten as
\begin{equation}\label{cbr}
  \lambda_0\,{\rm sign}\left<\dot x_0(0),z_0(0)\right>\left<
\widehat {F}\left(x_0(\zeta^n_0+\overline{\theta})\right),
z_0(\zeta^n_0+\overline{\theta})\right>
-(1-\lambda_0)\left|\left<\dot x_0(0),z_0(0)\right>\right|f\left(
\zeta^n_0
  \right) =0,
\end{equation}
let us show that
\begin{equation}\label{gf}
{\rm sign}\left<\dot x_0(0),z_0(0)\right>\left< \widehat{
F}\left(x_0\left(\theta\right)\right),
z_0\left(\theta\right)\right>=f_{x_0}(\theta)\quad{\rm for\ any\
}\theta\in[0,T].
\end{equation}
Denote by $Z(t)$ and $Z_0(t)$ the fundamental matrixes of the
adjoint system (\ref{as}) such that $Z(0)=I$ and
$Z_0(t)=(Z_{n-1}(t)\ z_0(t)),$ where $Z_{n-1}(t)$ is a $n\times
n-1$ matrix whose columns are (not $T$-periodic) linearly
independent solutions of (\ref{as}). Since
\[
  \left(x'_{(2)}(\tau,x_0(\theta))\right)^{-1}=Y(\theta)Y^{-1}(\tau+\theta)=
  \left(Z^{-1}(\theta)\right)^* Z^*(\tau+\theta)=\left(Z^{-1}_0(\theta)
  \right)^* Z^*_0(\tau+\theta),
\]
see e.g. (\cite{dem}, Chap.~III \S 12), and
$z_0(\theta)=\left(Z_{n-1}(\theta)\ \ z_0(\theta)\right)\left(\begin{array}{c} 0\\ \vdots\\ 0 \\
1\end{array}\right)$ then we have
\begin{equation}
\begin{aligned}
&\left< \widehat {F}\left(x_0\left(\theta\right)\right),
z_0\left(\theta\right)\right>=\left<\left(Z^{-1}_0(\theta)\right)
^*\int\limits_0^T
Z^*_0(\tau+\theta)\,\phi(\tau,x_0(\tau+\theta),0)d\tau,
z_0\left(\theta\right)\right>\\ \nonumber
&=\left<\int\limits_\theta^{T+\theta} \left(\begin{array}{c}
Z_{n-1}^*(\tau)\\ z_0(\tau)\end{array}\right)
\phi(\tau-\theta,x_0(\tau),0)d\tau,
\left(\begin{array}{c} 0\\ \vdots\\ 0 \\
1\end{array}\right)\right>\\ \nonumber
&=\int\limits_\theta^{T+\theta}
 \left<z_0(\tau),\,
\phi(\tau-\theta,x_0(\tau),0)\right>d\tau=f_{x_0}(\theta)
\end{aligned}
\end{equation}
and so (\ref{gf}) holds. By taking into account (\ref{gf}) we can
finally rewrite  (\ref{cbr}) as follows
\[
  \lambda_0 f_{x_0}\left(\zeta^n_0+\overline{\theta}\right)
-(1-\lambda_0)\left|\left<\dot x_0(0),z_0(0)\right>\right|f\left(
\zeta^n_0
  \right) =0,
\]
where either $\zeta^n_0+\overline{\theta}=\theta_1$ or
$\zeta^n_0+\overline{\theta}=\theta_2.$ This can be rewritten as
\begin{equation}\label{rev}
  \lambda_0 f_{x_0}\left(\theta_i\right)
-(1-\lambda_0)\left|\left<\dot x_0(0),z_0(0)\right>\right|f\left(
(-1)^i|\zeta^n_0|
  \right) =0,
\end{equation}
where either $i=1$ or $i=2$. If
$d_{\mathbb{R}}(f_{x_0},(\theta_1,\theta_2))=0,$ then, see
(\cite{krazab}, \S 3.2) for the definition of Brouwer degree in
$\mathbb{R},$ for any $i=1,2$ and any $a\ge 0$ we have
$$
f\left((-1)^i a\right)=-a\,{\rm sign}(f_{x_0}(\theta_1))=-a\,{\rm
sign}(f_{x_0}(\theta_2))
$$ and so if
$d_{\mathbb{R}}(f_{x_0},(\theta_1,\theta_2))=0$ then (\ref{rev})
can be rewritten as
\begin{equation}\label{rew1}
  \lambda_0 f_{x_0}\left(\theta_i\right)
+(1-\lambda_0)\left|\left<\dot
x_0(0),z_0(0)\right>\right|\,|\zeta_0^n|\,{\rm sign}
\left(f_{x_0}(\theta_i)\right) =0,
\end{equation}
where either $i=1$ or $i=2$. If
$d_{\mathbb{R}}(f_{x_0},(\theta_1,\theta_2))\not=0,$ then for
$i=1,2$ and any $a\ge 0$ we have
$$ f\left((-1)^i
a\right)=(-1)^{i+1} a\,
d_{\mathbb{R}}(f_{x_0},(\theta_1,\theta_2))=(-1)^{i+1} a \,(-1)^i
{\rm sign}(f_{x_0}(\theta_i))=-a\,{\rm sign}(f_{x_0}(\theta_i))$$
and so (\ref{rev}) can be rewritten again as (\ref{rew1}). But
(\ref{rew1}) contradicts either the assumption that
$f_{x_0}(\theta_1)\not= 0$ (in the case when $i=1$) or the
assumption that $f_{x_0}(\theta_2)\not= 0$ (in the case when
$i=2$).

\vskip0.2truecm \noindent Therefore, neither (\ref{case1}) nor
(\ref{case2}) can occur and so there exists $\delta_0>0$ such that
for any $\delta\in(0,\delta_0]$ and any
$\varepsilon\in(0,\delta^{1+\alpha}]$ we have that
$D_\varepsilon(\lambda,u)\not=0$ for any $\lambda\in[0,1]$ and any
$u\in\partial\widehat{W}_{\Gamma(C_{\delta})}.$ Thus for any
$\delta\in(0,\delta_0]$ and $\varepsilon\in(0,\delta^{1+\alpha}]$
both the Leray-Schauder degrees
$d(I-G_\varepsilon,\widehat{W}_{\Gamma(C_{\delta})})$ and
$d(I-\widehat{A}_\varepsilon,\widehat{W}_{\Gamma(C_{\delta})})$
are defined and (\ref{co2p}) holds. As already noticed
(\ref{co2p}) implies (\ref{co2}), hence to finish the proof it
remains only to show that
$d(I-A_\varepsilon,\Gamma(C_\delta))=(-1)^{\beta(x_0)}
d_{\mathbb{R}}\left(f_{x_0},\left(\theta_1,\theta_2\right)\right)
$ for any $\delta\in(0,\delta_0]$ and
$\varepsilon\in(0,\delta^{1+\alpha}].$ Let $\delta\in(0,\delta_0]$
and $\varepsilon\in(0,\delta^{1+\alpha}],$ since $\Gamma$ is a
homeomorphism of $B_\Delta(C_{\delta})$ onto
$\Gamma(B_\Delta(C_{\delta}))$ by (\cite{krazab}, Theorem 26.4) we
obtain
\[
  d_{\mathbb{R}^n}(I-A_\varepsilon,\Gamma(C_\delta))=d_{\mathbb{R}^n}
  (I-\Gamma^{-1}A_\varepsilon\Gamma,C_\delta).
\]
Let $\zeta\in C_\delta.$ Taking into account (\ref{xylink}) and
(\ref{abc}) we have
\[
  \zeta-(\Gamma^{-1}A_\varepsilon\Gamma)(\zeta)=\zeta-
  (\Gamma^{-1}A_\varepsilon)\left(\frac{Y(\zeta^n+\overline{\theta})}
  {\|Y\|_{M_T}}
  P_{n-1}\zeta+x_0(\zeta^n+\overline{\theta})\right)
\]
\[
  =\zeta-\Gamma^{-1}\left(x'_{(2)}\left(
T-\varepsilon f(\zeta^n),x_0\left(\zeta^n+
  \overline{\theta}\right)\right)\frac{Y(\zeta^n+\overline{\theta})}
  {\|Y\|_{M_T}}P_{n-1}\zeta+
  x_0(\zeta^n+\overline{\theta}-\varepsilon
    f(\zeta^n))\right)
\]
\[=\zeta-\Gamma^{-1}\Bigg(\frac{Y\left(\zeta^n+\overline{\theta}-\varepsilon
f(\zeta^n)\right)}
  {\|Y\|_{M_T}}P_{n-1}\left(\begin{array}{cc}
  {\rm e}^{\Lambda T} & 0 \\
  0 &   0\end{array}\right)\zeta +x_0\left(\zeta^n+\overline{\theta}-
  \varepsilon f(\zeta^n)\right)\Bigg)
\]
\[
  =\zeta-\left(\begin{array}{c}
  {\rm e}^{\Lambda T}\left(
  {\zeta|}_{\mathbb{R}^{n-1}}\right) \\
    \zeta^n-\varepsilon f(\zeta^n)\end{array}\right)
\] and so
\[
  d_{\mathbb{R}^n}(I-\Gamma^{-1}A_\varepsilon\Gamma,C_\delta)=
  d_{\mathbb{R}^n}\left(\left(I-{\rm e}^{\Lambda T}\right)\times
  \varepsilon f,C_\delta\right),
\]
where $\left(I-{\rm e}^{\Lambda T}\right)\times
  \varepsilon f=\left(I-{\rm e}^{\Lambda T},
  \varepsilon f\right).$ By the property of the Brouwer topological degree for the
product of vector fields, see e.g. (\cite{krazab}, Theorem 7.4) we
have
\[
  d_{\mathbb{R}^n}\left(\left(I-{\rm e}^{\Lambda T}\right)\times
   \varepsilon f,C_\delta\right)=
  d_{\mathbb{R}^n}\left(I-{\rm e}^{\Lambda
  T},B_{\delta}(0)\right)\cdot
  d_{\mathbb{R}}\left(\varepsilon  f,\left(-\frac{\theta_2-\theta_1}{2},
  \frac{\theta_2-\theta_1}{2}\right)\right),
\]
where $d_{\mathbb{R}^n}\left(I-{\rm e}^{\Lambda
T},B_{\delta}(0)\right)=(-1)^{\beta(x_0)}$ by (\cite{krazab},
Theorem 6.1), and by a direct computation we have that
\[ d_{\mathbb{R}}\left(\varepsilon
  f,\left(-\frac{\theta_2-\theta_1}{2},\frac{\theta_2-
  \theta_1}{2}\right)\right)=
\,-\,d_{\mathbb{R}}\left(f_{x_0},\left(\theta_1,\theta_2\right)\right).
\]
Thus, we finally have that
\[
  d_{\mathbb{R}^n}(I-\Gamma^{-1}A_\varepsilon\Gamma,C_\delta)=\,-\,
  (-1)^{\beta(x_0)}d_{\mathbb{R}}\left(f_{x_0},\left(\theta_1,
  \theta_2\right)\right).
\]
In conclusion, we have proved that there exists $\delta_0>0$ such
that for any $\delta\in(0,\delta_0]$ and any
$\varepsilon\in(0,\delta^{1+\alpha}]$ the Leray-Schauder
topological degree $d(I-Q_\varepsilon,W_{\Gamma(C_\delta)})$ is
defined and it can be calculated by the formula
\[
d(I-Q_\varepsilon,W_{\Gamma(C_\delta)})=\,-\,(-1)^{\beta(x_0)}
d_{\mathbb{R}}\left(f_{x_0},\left(\theta_1,\theta_2\right)\right).
\]
To conclude the proof we have only to show that
$V_\delta:=\Gamma(C_\delta)$ satisfies properties 1) and 2). To
this end,  let $\xi\in\Gamma(C_\delta),$ thus
\[
  \xi=\frac{Y(\zeta^n+\overline{\theta})}{\|Y\|_{M_T}}
  P_{n-1}\zeta+x_0(\zeta^n+\overline{\theta}).
\]
for some $\zeta\in\mathbb{R}^n$ satisfying
$\|P_{n-1}\zeta\|\le\delta$ and
$\left[\Gamma^{-1}(\xi)\right]^n+\overline{\theta}\in[\theta_1,\theta_2].$
Therefore
\[
\left\|\xi-x_0\left(\left[\Gamma^{-1}(\xi)\right]^n+\overline{\theta}
\right)\right\|=
\left\|\frac{Y(\zeta^n+\overline{\theta})}{\|Y\|_{M_T}}P_{n-1}\zeta\right\|
\le\|P_{n-1}\zeta\|\le\delta
\]
and so property 1) holds. By the definition of the set
$C_\delta$ we have that for any $\delta\in(0,\delta_0)$ both the
points $\left(0,...,0,-\dfrac{\theta_2-\theta_1}{2}\right)$ and
$\left(0,...,0,\dfrac{\theta_2-\theta_1}{2}\right)$ belong to the
boundary of $C_\delta.$ Therefore, both the points $x_0(\theta_1)$
and $x_0(\theta_2)$ belong to the boundary of $\Gamma(C_\delta).$
On the other hand if $\xi=x_0(\theta),$ where
$\theta\in(\theta_1,\theta_2),$ then
\begin{equation}\label{starr}
\Gamma^{-1}(\xi)=\left(0,...,0,\theta-\overline{\theta}\right)\subset
C_\delta.
\end{equation}
Thus $\xi\in\Gamma(C_\delta)$ and property 2) is also satisfied.
The proof of Theorem \ref{thm1} is now complete.
\end{proof}

Recall that
\[
\Theta_W(x)=\left\{\theta_0\in(0,T):S_{\theta_0}\,x\in\partial W,\
S_\theta\,
  x\in W {\rm\ for\ any\ }\theta\in(0,\theta_0)\right\},
  \quad{\rm where\ }x\in\mathfrak{S}_W,
\]
\[
(S_\theta\,x)(t) = x(t+\theta) \quad{\rm and}
\]
\[
\beta(x_0)\; {\rm is\ the\ sum\ of\ the\ multiplicities\ of\ the\
characteristic\ multipliers\ greater \ than\ } 1 {\rm \ of\
}(\ref{ls}).
\]

We can prove the following result.
\begin{theorem}\label{thm2} Assume that $\mathfrak{S}_W$ is finite and
it contains only nondegenerate $T$-periodic cycles of (\ref{np}).
Assume that $f_x(0)\not=0$ for any $x\in\mathfrak{S}_W.$ Then for
every $\varepsilon>0$ sufficiently small the topological degree
$d(I-Q_\varepsilon,W)$ is defined and the following formula holds
\begin{equation}\label{kmnform}
d(I-Q_\varepsilon,W)=(-1)^n
d_{\mathbb{R}^n}(\psi,W\cap\mathbb{R}^n)-\sum_{x\in\mathfrak{S}_W:~
\Theta_W(x)\not= \emptyset}(-1)^{\beta(x)}
d_{\mathbb{R}}\left(f_x,\left(0,\min\{\Theta_W(x)\}\right)\right),
\end{equation}
\end{theorem}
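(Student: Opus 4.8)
That $d(I-Q_\varepsilon,W)$ is defined for $\varepsilon$ small is exactly Malkin's result (\ref{mallemma}); the content to be proved is formula (\ref{kmnform}). The plan is to localise near the finitely many elements of $\mathfrak{S}_W$ and reduce everything to \thmref{thm1} and to the Capietto--Mawhin--Zanolin formula (\ref{mform}).

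Group the elements of $\mathfrak{S}_W$ by the periodic orbit of (\ref{np}) carrying them (distinct orbits of an autonomous $C^1$ system are disjoint): $\mathfrak{S}_W$ meets finitely many orbit-circles $\mathcal{O}_1,\dots,\mathcal{O}_N$, $\mathcal{O}_i=\{S_\theta y_i:\theta\in\mathbb{R}\}$ with $y_i$ a nondegenerate cycle of least period $T/p_i$; on $\mathcal{O}_i$ the points of $\mathfrak{S}_W$ correspond to finitely many phases $0\le\theta^{(1)}_i<\cdots<\theta^{(m_i)}_i<T/p_i$, and between two consecutive phases the arc of $\mathcal{O}_i$ lies either entirely in $W$ (an \emph{inside arc}) or entirely outside $\overline W$. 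Write $\operatorname{Fix}(Q_0)$ for the fixed point set of $Q_0$ (the equilibria of $\psi$ and the $T$-periodic orbits of (\ref{np})). As each $\mathcal{O}_i$ is nondegenerate, hence isolated in $\operatorname{Fix}(Q_0)$, I fix pairwise disjoint open tubular neighbourhoods $\Omega_i\supset\mathcal{O}_i$ so thin that $\operatorname{Fix}(Q_0)\cap\overline{\Omega_i}=\mathcal{O}_i$ and $\Omega_i\cap\mathbb{R}^n=\emptyset$, and set $W^-:=W\setminus\bigcup_i\overline{\Omega_i}$. Then $\operatorname{Fix}(Q_0)\cap\partial W^-=\emptyset$ (the fixed points of $Q_0$ on $\partial W$ lie in $\mathfrak{S}_W\subset\bigcup_i\Omega_i$, and no $\partial\Omega_i$ carries any) and $W^-\cap\mathbb{R}^n=W\cap\mathbb{R}^n$. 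A compactness argument for the compact operators $Q_\varepsilon$ — fixed points of $Q_\varepsilon$ in $\overline W$ accumulate on $\operatorname{Fix}(Q_0)\cap\overline W$ as $\varepsilon\to0$ — together with (\ref{mallemma}) for $\partial W$, shows that for all small $\varepsilon$ every fixed point of $Q_\varepsilon$ in $W$ lies in $W^-\cup\bigcup_i(W\cap\Omega_i)$ and none lies on any $\partial\Omega_i$ nor on $\partial W$, so by additivity of the Leray--Schauder degree
\[
d(I-Q_\varepsilon,W)=d(I-Q_\varepsilon,W^-)+\sum_{i=1}^{N}d(I-Q_\varepsilon,W\cap\Omega_i).
\]
Since $Q_0$ is fixed-point-free on $\partial W^-$, the homotopy $\varepsilon\mapsto Q_\varepsilon$ is admissible on $W^-$ for $\varepsilon$ small and (\ref{mform}) gives $d(I-Q_\varepsilon,W^-)=d(I-Q_0,W^-)=(-1)^n d_{\mathbb{R}^n}(\psi,W\cap\mathbb{R}^n)$, the first term of (\ref{kmnform}).

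It remains to evaluate each $d(I-Q_\varepsilon,W\cap\Omega_i)$. The open set $W\cap\Omega_i$ is a disjoint union of pieces: one thin neighbourhood $\Omega_{i,j}$ of each open inside arc $\{S_\theta y_i:\theta\in(\theta^{(j)}_i,\theta^{(j+1)}_i)\}$ — whose boundary meets $\mathcal{O}_i$ exactly in the two functions $S_{\theta^{(j)}_i}y_i,\,S_{\theta^{(j+1)}_i}y_i\in\mathfrak{S}_W$ — plus, possibly, fixed-point-free blobs near phases where $\mathcal{O}_i$ touches $\partial W$ from outside, which contribute $0$. Apply \thmref{thm1} to $y_i$ with $\theta_1=\theta^{(j)}_i$, $\theta_2=\theta^{(j+1)}_i$: this is legitimate since $0\le\theta_1<\theta_2\le\theta_1+T/p_i$ and $f_{y_i}(\theta_1)=f_{S_{\theta^{(j)}_i}y_i}(0)\ne0$, $f_{y_i}(\theta_2)=f_{S_{\theta^{(j+1)}_i}y_i}(0)\ne0$, using the hypothesis on $\mathfrak{S}_W$ and the translation identity $f_{S_\theta y}(s)=f_y(s+\theta)$ (which follows from (\ref{biffun}) and the constancy of $\langle\dot y(t),z(t)\rangle$ from Perron's lemma). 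This produces thin sets $W_{V_\delta}$ with $d(I-Q_\varepsilon,W_{V_\delta})=-(-1)^{\beta(y_i)}d_{\mathbb{R}}(f_{y_i},(\theta^{(j)}_i,\theta^{(j+1)}_i))$ for $\varepsilon\in(0,\delta^{1+\alpha}]$. Now $\Omega_{i,j}$ and $W_{V_\delta}$ are thin neighbourhoods of the same open arc with the same two endpoint functions of $\mathfrak{S}_W$ on their boundaries. Malkin's result applied to the domain $\Omega_{i,j}$ (whose boundary fixed points of $Q_0$ are $S_{\theta^{(j)}_i}y_i,S_{\theta^{(j+1)}_i}y_i$) makes $d(I-Q_\varepsilon,\Omega_{i,j})$ defined, and the local form of Malkin's theory — $f_{y_i}(\theta_*)\ne0$ implies (\ref{ps}) has no $T$-periodic solution within a fixed distance of $S_{\theta_*}y_i$ once $\varepsilon$ is small — shows that, for $\varepsilon$ small, all fixed points of $Q_\varepsilon$ near the closed arc lie in a fixed neighbourhood of a compact sub-arc of the open arc, hence in $\Omega_{i,j}\cap W_{V_\delta}$. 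Excision in both directions then gives $d(I-Q_\varepsilon,\Omega_{i,j})=d(I-Q_\varepsilon,W_{V_\delta})$.

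Collecting terms, and using $\beta(S_\theta y_i)=\beta(y_i)$ (the monodromy matrix of (\ref{ls}) changes only by conjugation under a shift), $d_{\mathbb{R}}(f_{y_i},(\theta^{(j)}_i,\theta^{(j+1)}_i))=d_{\mathbb{R}}(f_{S_{\theta^{(j)}_i}y_i},(0,\theta^{(j+1)}_i-\theta^{(j)}_i))$, and the bijection between the inside arcs and the set $\{x\in\mathfrak{S}_W:\Theta_W(x)\ne\emptyset\}$ (under which $\theta^{(j+1)}_i-\theta^{(j)}_i=\min\{\Theta_W(S_{\theta^{(j)}_i}y_i)\}$) — an inside arc of full period $T/p_i$ contributes $\tfrac12\big(\operatorname{sign}f_{y_i}(\theta_2)-\operatorname{sign}f_{y_i}(\theta_1)\big)=0$, since $f_{y_i}$ has period $T/p_i$, and is harmlessly dropped — one arrives at exactly (\ref{kmnform}). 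The main obstacle is the identification $d(I-Q_\varepsilon,\Omega_{i,j})=d(I-Q_\varepsilon,W_{V_\delta})$: it requires controlling, \emph{uniformly in the small parameter} $\varepsilon$, that the $T$-periodic solutions of (\ref{ps}) bifurcating near each boundary cycle are genuinely trapped in the tubes produced by \thmref{thm1}, which is precisely where the local non-existence content of Malkin's theory enters; by comparison, checking the admissibility of the additivity/excision steps over one common range $\varepsilon\in(0,\varepsilon_0)$ and handling the tangential configurations of $\mathcal{O}_i$ with $\partial W$ are routine.
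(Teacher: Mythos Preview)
Your approach is essentially the paper's: both localise via Malkin's non-existence result near the boundary cycles, apply Theorem~\ref{thm1} to each inside arc, and invoke (\ref{mform}) on the complement. The paper organises the localisation slightly differently---rather than introducing tubes $\Omega_i$ around whole orbits and then matching each $\Omega_{i,j}$ with a Theorem~\ref{thm1} set by excision, it works directly with the unions $B_{\delta_*}(x)\cup B_{\delta_*}(S_{\min\Theta_W(x)}x)\cup W_{V_{\delta_{**}}(x)}$ for each $x\in\mathfrak{S}_W$ with $\Theta_W(x)\ne\emptyset$, observing that the endpoint balls carry degree $0$ by (\ref{mark0}) and that the portion of this union lying outside $\overline W$ is contained in those balls; this sidesteps precisely the excision step you flag as the main obstacle.
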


\begin{proof} For any $x\in\mathfrak{S}_W$ satisfying $\Theta_W(x)\not=\emptyset$ let
$\delta_0(x)$ and $\{V_\delta(x)\}_{\delta\in(0,\delta_0(x))}$ as
given by Theorem \ref{thm1}, where $x_0:=x,$ $\theta_1:=0$ and
$\theta_2:=\min\{\Theta_W(x)\}.$ Let
$\delta_1=\min_{x\in\mathfrak{S}_W:\Theta_W(x)\not=\emptyset}\delta_0(x)>0.$
Since $f_x(0)\not=0$ for any $x\in\mathfrak{S}_W$ then by Malkin's
theorem, see \cite{mal} or (\cite{malb}, Theorem p.~387), there
exists $\delta_*\in(0,\delta_1)$ and $\varepsilon_*>0$ such that
\begin{equation}\label{mark0}
   Q_\varepsilon \widetilde {x}\not=\widetilde {x}\quad{\rm for\ any\ }
\widetilde{x}\in \overline{B_{\delta_*}
  \left(x\right)}{\rm \ whenever \ }
x\in\mathfrak{S}_W{\ \rm and\ }
  \varepsilon\in(0,\varepsilon_*).
\end{equation}
By the definition of $\mathfrak{S}_W$ from (\ref{mark0}) we have
that
\begin{equation}\label{mark}
   Q_\varepsilon \widetilde{x}\not=\widetilde{x}\quad{\rm for\ any\ }
\widetilde{x}\in \overline{B_{\delta_*}
  \left(x\right)}\cup\overline{B_{\delta_*}
  \left(S_{\min\{\Theta_W(x)\}}x\right)} {\rm \ whenever\ }
x\in\mathfrak{S}_W{\ \rm and\ }
  \varepsilon\in(0,\varepsilon_*)
\end{equation}
Let $\delta_{**}\in(0,\delta_*)$  be sufficiently small in such a
way that
\[
\left(B_{\delta_*} (x)\cup B_{\delta_*}(S_{\min\{\Theta_W(x)\}}
x)\cup
  W_{V_{\delta_{**}}(x)}\right)\backslash \overline{W}\subset B_{\delta_*}
  (x)\cup B_{\delta_*}(S_{\min\{\Theta_W(x)\}}
  x)
\]
for any $x\in \mathfrak{S}_W$, therefore by taking into account
(\ref{mark}) we have
\[
  Q_\varepsilon \widetilde{x}\not=\widetilde{x}\quad{\rm for\ any\ }
\widetilde{x}\in \left(B_{\delta_*} (x)\cup
B_{\delta_*}(S_{\min\{\Theta_W(x)\}} x)\cup
  W_{V_{\delta_{**}}(x)}\right)\backslash \overline{W},
\]
whenever $x\in \mathfrak{S}_W$ and
$\varepsilon\in(0,\varepsilon_*)$. Therefore by applying the
coincidence degree formula given by Theorem \ref{thm1} for any
$x\in\mathfrak{S}_W$ such that $\Theta_W(x)\not=\emptyset$ and any
$\varepsilon\in\left(0,\min\{\delta^{1+\alpha},\varepsilon_*\}\right)$
we have
\begin{equation}
\begin{aligned}
&d\left(I-Q_\varepsilon,\left(B_{\delta_*} (x)\cup
B_{\delta_*}(S_{\min\{\Theta_W(x)\}} x)\cup
  W_{V_{\delta_{**}}(x)}\right)\cap W\right)\\
&= d\left(I-Q_{\varepsilon},B_{\delta_*}
  (x)\cup B_{\delta_*}(S_{\min\{\Theta_W(x)\}} x)
  \cup W_{V_{\delta_{**}}(x)}\right)\\
&= d(I-Q_\varepsilon,W_{V_{\delta_{**}}(x)})=\,-\,(-1)^{\beta(x)}
d_{\mathbb{R}}\left(f_x,\left(0,\min\{\Theta_W(x)\}\right)\right).\label{fo1}
\end{aligned}
\end{equation}
Let
\[
  \mathfrak{S}_W^0=\left\{x\in\mathfrak{S}_W:{\rm there\ exists\
  }\delta_0>0{\rm\ such\ that\ }S_\delta(x)\not\in\partial W{\rm\
  for\ any\ }\delta\in(-\delta_0,0)\cup (0,\delta_0)\right\}.
\]
From (\ref{mark0}) we have that
\begin{equation}\label{fo1rev}
d\left(I-Q_\varepsilon,B_{\delta_*}(x) \cap W\right)=0\quad {\rm
for \ any\ } x\in \mathfrak{S}_W^0{\ \rm and\ any \ }
  \varepsilon\in(0,\varepsilon_*).
\end{equation}
Since any point $x\in\mathfrak{S}_W$ is a limit cycle of
(\ref{np}) and, by assumption, they are in a finite number we may
assume without loss of generality that $\delta_*>0$ is
sufficiently small to have that
\begin{equation}\label{bege}
Q_0(\widehat{ x})\not=\widehat{ x} \quad {\rm for\ any\ }\widehat{
x}\in C([0,T],\mathbb{R}^n){\rm\ such\ that\ }\widehat {x}(0)\in
B_{\delta_*}(x([0,T]))\backslash x([0,T]).
\end{equation}
Therefore we have that the boundary of the set $W\backslash
E_{\delta_*}$ where
\[
E_{\delta_*}:=\left(\bigcup_{x\in\mathfrak{S}_W:\Theta_W(x)\not=\emptyset}
\left(B_{\delta_*}
  (x)\cup B_{\delta_*}(S_{\Theta_W(x)} x)\cup
  W_{V_{\delta_{**}}(x)}\right)\cap W\right)\bigcup\left(\bigcup_{x\in\mathfrak{S}_W^0}
  B_{\delta_*}(x)\cap W\right)
\]
does not contain $T$-periodic solutions of (\ref{np}). This fact
allows us to apply Corollary~1 of \cite{maw} to obtain
\begin{equation}\label{ma}
d(I-Q_0,W\backslash E_{\delta_*})=(-1)^n d_{\mathbb{R}^n}(\psi,
E_{\delta_*}\cap\mathbb{R}^n).
\end{equation}
But from (\ref{bege}) the function  $\psi$ is nondegenerate on the
set $E_{\delta_*}\cap\mathbb{R}^n$ and from (\ref{ma}) we have
that
\begin{equation}\label{fo2}
  d(I-Q_0,W\backslash E_{\delta_*})=
  (-1)^n d_{\mathbb{R}^n}(\psi,W\cap\mathbb{R}^n).
\end{equation}
From (\ref{fo1}), (\ref{fo1rev}) and (\ref{fo2}) the conclusion of
the theorem easily follows.
\end{proof}

\begin{remark}\label{remarknew}
From \ref{kmnform}) it follows that the points of $\mathfrak{S}_W$
such that $S_\theta x\not\in\mathfrak{S}_W$ for all
$\theta\in(0,T)$ do not affect the value of $d(I-Q_\varepsilon,W)$
with $\varepsilon>0$ sufficiently small.
\end{remark}

\vskip.3truecm Let $X=\{x\in C([0,T],\mathbb{R}^n):x(0)=x(T)\}$
and let $L:{\rm dom}L\subset X\to L^1([0,T],\mathbb{R}^n)$ be the
linear operator defined by $(Lx)(\cdot)=\dot x(\cdot)$ with ${\rm
dom} L=\{x\in X: x(\cdot){\rm\ is\ absolutely\ continuous}\}.$ It
is immediate to see that $L$ is a Fredholm operator of index zero.
Let $N_\varepsilon:X\to L^1([0,T],\mathbb{R}^n)$ be the Nemitcky
operator given by $(N_\varepsilon
x)(\cdot)=\psi(x(\cdot))+\varepsilon\phi(\cdot,x(\cdot),\varepsilon).$
Thus the existence of $T$-periodic solutions for system (\ref{ps})
is equivalent to the solvability of the equation
\begin{equation}\label{coequ}
Lx=N_\varepsilon x,\quad x\in{\rm dom}L.
\end{equation}

We now provide for the coincidence degree
$D_L(L-N_\varepsilon,W\cap X)$ of $L$ and $N_\varepsilon,$ see
(\cite{mawbvp}, p.~19), a formula similar to that established in
Theorem \ref{thm1}.

\vskip0.2truecm
\begin{corollary}\label{cor1}
Assume all the conditions of
Theorem \ref{thm2}. Then for $\varepsilon>0$ sufficiently small
the coincidence degree $D_L(L-N_\varepsilon,W\cap X)$ is defined
and the following formula holds
\begin{equation}
\begin{aligned}
 D_L(L-N_\varepsilon,W\cap X)=\; &(-1)^n
d_{\mathbb{R}^n}(\psi,W\cap\mathbb{R}^n)\;\\
& -\sum_{x\in\mathfrak{S}_W: ~\Theta_W(x) \not=
\emptyset}(-1)^{\beta(x)}
d_{\mathbb{R}}\left(f_x,\left(0,\min\{\Theta_W(x)\}\right)\right).\label{kmnformD}
\end{aligned}
\end{equation}
\end{corollary}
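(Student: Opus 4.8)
The plan is to deduce the result from Theorem~\ref{thm2} by identifying, for $\varepsilon>0$ sufficiently small, the coincidence degree $D_L(L-N_\varepsilon,W\cap X)$ with the Leray--Schauder degree $d(I-Q_\varepsilon,W)$ evaluated there; then (\ref{kmnformD}) is literally (\ref{kmnform}). First I would write down the Mawhin reduction of the coincidence equation (\ref{coequ}). Since $L$ is Fredholm of index zero, with $\ker L$ the $n$-dimensional space of constant functions and $\IM L=\{z\in L^1([0,T],\mathbb{R}^n):\int_0^Tz(\tau)\,d\tau=0\}$, I take the continuous projectors $(Px)(t)=x(0)$ on $X$ and $(Qz)(t)=\frac1T\int_0^Tz(\tau)\,d\tau$ on $L^1([0,T],\mathbb{R}^n)$, the right inverse $(K_Pz)(t)=\int_0^tz(\tau)\,d\tau$ and the canonical isomorphism $J=\mathrm{id}\colon\IM Q\to\ker L$ (both being the same copy of $\mathbb{R}^n$). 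Then, by the definition of the coincidence degree (\cite{mawbvp}, p.~19), $D_L(L-N_\varepsilon,W\cap X)=d(I-\Phi_\varepsilon,W\cap X)$ whenever the latter is defined, where $\Phi_\varepsilon=P+JQN_\varepsilon+K_P(I-Q)N_\varepsilon$, i.e.
\[
(\Phi_\varepsilon x)(t)=x(0)+\frac1T\int_0^TN_\varepsilon x(\tau)\,d\tau+\int_0^tN_\varepsilon x(\tau)\,d\tau-\frac tT\int_0^TN_\varepsilon x(\tau)\,d\tau,
\]
with $(N_\varepsilon x)(\tau)=\psi(x(\tau))+\varepsilon\phi(\tau,x(\tau),\varepsilon)$.

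Next I would extend $\Phi_\varepsilon$ to an operator $\widetilde\Phi_\varepsilon$ on all of $C([0,T],\mathbb{R}^n)$ by the same formula; it is completely continuous (standard). A direct computation shows $(\widetilde\Phi_\varepsilon x)(0)=(\widetilde\Phi_\varepsilon x)(T)$ for \emph{every} $x\in C([0,T],\mathbb{R}^n)$, so $\widetilde\Phi_\varepsilon$ takes its values in the closed subspace $X$ and $\widetilde\Phi_\varepsilon|_X=\Phi_\varepsilon$. Hence, by the reduction property of the Leray--Schauder degree relative to a closed subspace (\cite{krazab}, Theorem~27.1),
\[
d(I-\widetilde\Phi_\varepsilon,W)=d(I-\Phi_\varepsilon,W\cap X)
\]
as soon as one of the two sides is defined.

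It then remains to relate $Q_\varepsilon$ and $\widetilde\Phi_\varepsilon$ on $W$. For this I would use the compact linear homotopy $H_\lambda=(1-\lambda)Q_\varepsilon+\lambda\widetilde\Phi_\varepsilon$, $\lambda\in[0,1]$, and check that for every $\lambda$ the fixed points of $H_\lambda$ in $C([0,T],\mathbb{R}^n)$ are exactly the $T$-periodic solutions of (\ref{ps}). Indeed, evaluating $x=H_\lambda x$ at $t=0$ and $t=T$ gives two $\mathbb{R}^n$-valued linear relations between $x(0)-x(T)$ and $\int_0^TN_\varepsilon x(\tau)\,d\tau$ whose $2\times2$ scalar coefficient determinant is a polynomial in $\lambda$ that does not vanish on $[0,1]$; therefore $x(0)=x(T)$ and $\int_0^TN_\varepsilon x(\tau)\,d\tau=0$, and then the fixed-point identity reduces to $x(t)=x(0)+\int_0^tN_\varepsilon x(\tau)\,d\tau$, i.e. $\dot x=N_\varepsilon x$; the converse is immediate. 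Thus the fixed-point set of $H_\lambda$ does not depend on $\lambda$, and, under the hypotheses of Theorem~\ref{thm2}, Malkin's theorem --- applied exactly as for (\ref{mark0}) --- ensures that for all $\varepsilon>0$ small this set lies in a fixed neighbourhood of the cycles of $\mathfrak{S}_W$ disjoint from $\partial W$. Hence $H_\lambda$ is an admissible compact homotopy on $W$, so
\[
d(I-Q_\varepsilon,W)=d(I-H_0,W)=d(I-H_1,W)=d(I-\widetilde\Phi_\varepsilon,W),
\]
and combining this with the two previous displays yields $D_L(L-N_\varepsilon,W\cap X)=d(I-Q_\varepsilon,W)$ for all $\varepsilon>0$ small (in particular the coincidence degree is defined). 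Formula (\ref{kmnformD}) now follows from Theorem~\ref{thm2}.

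I expect the only genuinely delicate point to be the admissibility of $H_\lambda$ in the last step, i.e. the absence of fixed points on $\partial W$ uniformly in $\lambda$; this is exactly where the assumption $f_x(0)\neq0$ on $\mathfrak{S}_W$ enters, through Malkin's result, and since the fixed-point set does not depend on $\lambda$ the estimate (\ref{mark0}) from the proof of Theorem~\ref{thm2} transfers verbatim. A minor bookkeeping issue is to note that no spurious sign appears in $D_L$: $\ker L$ and $\IM Q$ are canonically identified (the constant functions), and the choice $J=\mathrm{id}$ is orientation-preserving.
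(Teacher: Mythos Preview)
Your argument is correct and reaches the same conclusion as the paper, but by a different (more explicit) route. The paper does not construct the homotopy $H_\lambda$ at all: it introduces the auxiliary field
\[
(R_\varepsilon x)(t)=x(t)-x(0)-\int_0^T N_\varepsilon x\,d\tau-\int_0^t N_\varepsilon x\,d\tau+t\int_0^T N_\varepsilon x\,d\tau,
\]
observes that its zeros coincide with the fixed points of $Q_\varepsilon$, and then invokes Mawhin's duality theorems (\cite{mawbvp}, Theorems~III.1, III.4, III.7) as black boxes to obtain $D_L(L-N_\varepsilon,W\cap X)=d(R_\varepsilon,W)=d(I-Q_\varepsilon,W)$. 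Your approach instead writes out the Mawhin fixed-point operator $\Phi_\varepsilon$ explicitly, extends it to the full space, applies the subspace reduction (\cite{krazab}, Theorem~27.1), and links $\widetilde\Phi_\varepsilon$ to $Q_\varepsilon$ by a hands-on linear homotopy whose fixed-point set you check directly to be $\lambda$-independent. This is more self-contained --- it uses only the definition of the coincidence degree and elementary Leray--Schauder properties --- at the price of the $2\times2$ computation; the paper's version is shorter but relies on the machinery of \cite{mawbvp}, Chap.~III.

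One small simplification: for the admissibility of $H_\lambda$ you do not need to re-invoke Malkin's theorem. Theorem~\ref{thm2} already asserts that $d(I-Q_\varepsilon,W)$ is defined for small $\varepsilon$, i.e.\ $Q_\varepsilon$ has no fixed points on $\partial W$; since you have shown that the fixed-point set of $H_\lambda$ equals that of $Q_\varepsilon$ for every $\lambda$, admissibility follows immediately. Your phrasing that the fixed-point set ``lies in a fixed neighbourhood of the cycles of $\mathfrak{S}_W$'' is not quite what you need (and not quite true --- there may be $T$-periodic solutions of (\ref{ps}) far from $\mathfrak{S}_W$ inside $W$); the correct statement is simply that it avoids $\partial W$.
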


\begin{proof} Since $d(I-Q_\varepsilon,W)$ is
defined for $\varepsilon>0$ sufficiently small then
$D_L(L-N_\varepsilon,W\cap X)$ is also defined for $\varepsilon>0$
sufficiently small, see (\cite{mawbvp}, Chap. 2 \S 2). To prove
(\ref{kmnformD}) we apply the duality principles developed in
(\cite{mawbvp}, Chap.~3). First, observe that the zeros of the
operator $R_\varepsilon:C([0,T],\mathbb{R}^n)\to
C([0,T],\mathbb{R}^n)$ defined by
\begin{equation}
\begin{aligned}
(R_\varepsilon x)(t)=&\; x(t)-x(0)-\int\limits_0^T
  \left(\psi(x(\tau))+\varepsilon\phi(\tau,x(\tau),\varepsilon)\right)d\tau-\nonumber\\
&-\int\limits_0^t\left(\psi(x(\tau))+\varepsilon\phi(\tau,x(\tau),\varepsilon)
  \right)d\tau
+t\int\limits_0^T
  \left(\psi(x(\tau))+\varepsilon\phi(\tau,x(\tau),\varepsilon)\right)d\tau
\end{aligned}
\end{equation}
coincide with the fixed points of the operator $Q_\varepsilon,$
hence $d(R_\varepsilon,W)$ is also defined for $\varepsilon>0$
sufficiently small. Therefore by (\cite{mawbvp}, Theorem III.1
with $a=1$ and $b=0$) and (\cite{mawbvp}, Theorem III.4) we have
that
\[
  d(R_\varepsilon,W)=d(I-Q_\varepsilon,W).
\]
Furthermore, by using the methods employed in (\cite{mawbvp},
Chap.~III, \S 4) for defining $D_L(L-N_\varepsilon,W\cap X)$ and
by (\cite{mawbvp}, Theorem III.7) we obtain that
\[
D_L(L-N_\varepsilon,W\cap X)=d(R_\varepsilon,W),
\]
which concludes the proof.
\end{proof}

\begin{remark}\label{remark1} If $W=W_U$ for a
suitable open set $U\subset\mathbb{R}^n$ then it is possible to
rewrite (\ref{kmnform}) and (\ref{kmnformD}) in a different way by
representing the sets $\mathfrak{S}_W$ as follows
\[
  \mathfrak{S}_W=\bigcup_{\xi\in\partial
  U:x(0,\xi)=x(T,\xi)}x(\cdot,\xi)
\]
and
\[
  \Theta_W(x)=\left\{\theta_0\in(0,T):x(\theta_0)\in\partial U,\
  x(\theta)\in U{\ \rm for\ any\ }\theta\in(0,\theta_0)\right\}.
\]
Moreover, if
\begin{equation}\label{state}
{\rm any\ Cauchy\ problem\ associated\ to\ } (\ref{ps}) {\rm\ has\
an\ unique\ solution\ defined\  in\ }[0,T],
\end{equation}
then we can introduce the Poincar\'e-Andronov operator
$\Omega_\varepsilon: \mathbb{R}^n\to \mathbb{R}^n$ in the
following way
\[
\Omega_\varepsilon(\xi)=x_\varepsilon(T,\xi),
\]
where $x_\varepsilon(\cdot,\xi)$ is the solution of (\ref{ps})
satisfying $x_\varepsilon(0,\xi)=\xi.$ In this case we can provide
an analogous result to (\ref{kmnform}) for the Brouwer topological
degree of $I-\Omega_\varepsilon$ on $U.$
\end{remark}

Indeed, we can prove the following result.

\begin{corollary}\label{cor2} Assume that condition (\ref{state}) is
satisfied. Let
\[
  \mathfrak{S}^U=\bigcup_{\xi\in\partial
  U:x(0,\xi)=x(T,\xi)}x(\cdot,\xi).
\]
Assume that $\mathfrak{S}^U$ is finite and any $T$-periodic
solution $x_0\in\mathfrak{S}^U$ is a nondegenerate limit cycle of
(\ref{np}). If
\[
f_x(0)\not=0\quad{\rm for\ any\ }x\in \mathfrak{S}^U
\]
then for all $\varepsilon>0$ sufficiently small the topological
degree $d_{\mathbb{R}^n}(I-\Omega_\varepsilon,U)$ is defined and
it can be evaluated by the formula
\begin{equation}\label{kmnformU}
d_{\mathbb{R}^n}(I-\Omega_\varepsilon,U)=(-1)^n
d_{\mathbb{R}^n}(\psi,U)-\sum_{x\in\mathfrak{S}^U:~\Theta^U(x)\not=
\emptyset}(-1)^{\beta(x)}
d_{\mathbb{R}}\left(f_x,\left(0,\min\{\Theta^U(x)\}\right)\right),
\end{equation}
where, for any  $x\in \mathfrak{S}^U $,
$\Theta^U(x)=\left\{\theta_0\in(0,T):x(\theta_0)\in\partial U,\
x(\theta)\in U{\ \it for\ any\ }\theta\in(0,\theta_0)\right\}$ and
$\beta(x)$ is the sum of the multiplicities of the characteristic
multipliers greater than $1$  of (\ref{ls}) with $x_0:=x$.
\end{corollary}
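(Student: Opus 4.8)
The plan is to deduce Corollary~\ref{cor2} from \thmref{thm2} applied to the curved tube $W=W_U$, as anticipated in Remark~\ref{remark1}, and then to convert the resulting identity for the integral operator $Q_\varepsilon$ into one for the Poincar\'e--Andronov operator $\Omega_\varepsilon$. First I would put $W:=W_U$ and use the representations $\mathfrak{S}_{W_U}=\mathfrak{S}^U$ and $\Theta_{W_U}(x)=\Theta^U(x)$ recorded in Remark~\ref{remark1}: the hypotheses assumed here (finiteness of $\mathfrak{S}^U$, nondegeneracy of its elements, $f_x(0)\neq 0$) are then exactly those of \thmref{thm2}, so for all sufficiently small $\varepsilon>0$ the degree $d(I-Q_\varepsilon,W_U)$ is defined and equals $(-1)^n d_{\mathbb{R}^n}(\psi,W_U\cap\mathbb{R}^n)$ minus the sum over $\mathfrak{S}^U$ appearing in (\ref{kmnformU}). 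Two reductions then remain: to replace $W_U\cap\mathbb{R}^n$ by $U$ in the first term, and to replace $d(I-Q_\varepsilon,W_U)$ by $d_{\mathbb{R}^n}(I-\Omega_\varepsilon,U)$.

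For the first reduction, note that the zeros of $\psi$ are the equilibria of (\ref{np}), and for such a point $\xi_0$ one has $x^{-1}(t,\xi_0)=\xi_0$ for every $t$; hence $\xi_0\in W_U$ if and only if $\xi_0\in U$, so $\psi$ has the same zeros in $W_U\cap\mathbb{R}^n$ and in $U$, and moreover $W_U\cap\mathbb{R}^n\subseteq U$. A constant periodic orbit has $\dot x_0\equiv 0$ and therefore $f_{x_0}\equiv 0$, which is incompatible with $f_x(0)\neq 0$ on $\mathfrak{S}^U$; consequently $\psi$ has no zero on $\partial U$, hence none on the compact set $\overline U\setminus(W_U\cap\mathbb{R}^n)$. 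By the excision property of the Brouwer degree, $d_{\mathbb{R}^n}(\psi,W_U\cap\mathbb{R}^n)=d_{\mathbb{R}^n}(\psi,U)$.

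For the second reduction, under (\ref{state}) the operator $\Omega_\varepsilon$ is well defined and continuous, and the homeomorphism $(Mx)(t)=x^{-1}(t,x(t))$ used in the proof of \thmref{thm1} maps $W_U$ onto the tube $\widehat{W}_U=\{u:u(t)\in U\ \text{for all}\ t\}$ and conjugates $Q_\varepsilon$ to $G_\varepsilon$, so $d(I-Q_\varepsilon,W_U)=d(I-G_\varepsilon,\widehat{W}_U)$. If $u$ is a fixed point of $G_\varepsilon$, then $\dot u(t)=\varepsilon\big(x'_{(2)}(t,u(t))\big)^{-1}\phi(t,x(t,u(t)),\varepsilon)$ and $u(0)=x(T,u(T))$; writing $\xi:=u(0)$ and $w(t):=x(t,u(t))$ one checks that $w$ solves (\ref{ps}) with $w(0)=\xi$, so $w=x_\varepsilon(\cdot,\xi)$, $u(t)=x^{-1}(t,x_\varepsilon(t,\xi))$, and the relation $u(0)=x(T,u(T))$ becomes $\Omega_\varepsilon(\xi)=\xi$. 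Conversely, for $\varepsilon$ small each fixed point $\xi\in U$ of $\Omega_\varepsilon$ produces the fixed point $x^{-1}(\cdot,x_\varepsilon(\cdot,\xi))$ of $G_\varepsilon$, which lies in $\widehat{W}_U$ because it is uniformly $O(\varepsilon)$-close to the constant function $\xi$. Since \thmref{thm2} and Malkin's estimate (\ref{mark0}) already guarantee that $d(I-G_\varepsilon,\widehat{W}_U)$ and $d_{\mathbb{R}^n}(I-\Omega_\varepsilon,U)$ are both defined, the standard identification of the fixed-point index of the integral operator with that of the operator of translation along trajectories (see, e.g., \cite{non}) yields $d(I-G_\varepsilon,\widehat{W}_U)=d_{\mathbb{R}^n}(I-\Omega_\varepsilon,U)$, and combining this with the first reduction and the formula from \thmref{thm2} gives (\ref{kmnformU}).

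The delicate step is the last one. One cannot obtain it by a global homotopy of $Q_\varepsilon$ (or $G_\varepsilon$) to a finite-dimensional map followed by the reduction theorem, because $\partial W_U$ may contain fixed points of $Q_0$ — precisely the situation this paper is built to handle — so such a deformation need not be admissible. The identification with $\Omega_\varepsilon$ must therefore be carried out only once \thmref{thm2} and (\ref{mark0}) have secured admissibility on both sides, matching the contributions of corresponding fixed points rather than deforming in one stroke; verifying that the fixed points of $G_\varepsilon$ indeed remain in the curved tube (equivalently, that $x^{-1}(t,x_\varepsilon(t,\xi))\in U$ for all $t$ and all relevant $\xi$) is the routine but slightly technical part of the argument.
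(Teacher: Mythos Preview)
Your overall strategy matches the paper's: apply \thmref{thm2} to $W=W_U$ via Remark~\ref{remark1}, then carry out two reductions. The execution differs in both.

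For the first reduction your excision argument is simpler than the paper's, which instead runs a Leray--Schauder continuation through the family $U_\lambda=\{\xi: x^{-1}(\lambda t,\xi)\in U\ \text{for all }t\in[0,T]\}$, $\lambda\in[0,1]$, interpolating between $U_0=U$ and $U_1=W_U\cap\mathbb{R}^n$. Your shortcut is valid, but the reason you give for $\psi\neq 0$ on $\partial U$ is slightly off: you cannot invoke $f_{x_0}\equiv 0$ for a constant orbit, since the bifurcation function (\ref{biffun}) presupposes a nondegenerate cycle (indeed $\langle\dot x_0(0),z_0(0)\rangle=0$ there, and the whole set-up of (\ref{ls})--(\ref{as}) with a simple multiplier $1$ breaks down). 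The correct reason, which the paper uses, is that an equilibrium on $\partial U$ would give a constant member of $\mathfrak{S}^U$ which is not a nondegenerate limit cycle, contradicting the hypothesis.

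For the second reduction your appeal to a ``standard identification'' is where the argument is genuinely incomplete. The pair $(\widehat W_U,U)$ is \emph{not} a common core for the perturbed problem: the natural correspondence $\xi\leftrightarrow x_\varepsilon(\cdot,\xi)$ sends $U$ to the $\varepsilon$-dependent tube
\[
W_U^\varepsilon=\left\{\widehat x\in C([0,T],\mathbb{R}^n): x_\varepsilon^{-1}(t,\widehat x(t))\in U\ \text{for all }t\in[0,T]\right\},
\]
not to $W_U$, so Theorem~28.5 of \cite{krazab} (or the analogous statements in \cite{non}) does not apply directly to $(W_U,U)$. The paper fills this gap by first proving, via a contradiction argument resting on Malkin's result (\ref{maln})--(\ref{mallemma}), that $Q_\varepsilon$ has no fixed point in the symmetric difference $(W_U\setminus W_U^\varepsilon)\cup(W_U^\varepsilon\setminus W_U)$ for $\varepsilon$ small; excision then gives $d(I-Q_\varepsilon,W_U)=d(I-Q_\varepsilon,W_U^\varepsilon)$, after which the common-core theorem yields $d(I-Q_\varepsilon,W_U^\varepsilon)=d_{\mathbb{R}^n}(I-\Omega_\varepsilon,U)$. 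You correctly flag the boundary issue and match the fixed-point \emph{sets}, but matching sets does not match \emph{degrees}; the missing device is precisely this passage through $W_U^\varepsilon$.
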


\begin{proof} From Theorem  \ref{thm2}, taking into account
Remark \ref{remark1} we have that there exists $\varepsilon_0>0$
such that for every $\varepsilon\in(0,\varepsilon_0]$ the degree
$d(I-Q_\varepsilon,W_U)$ is defined and
\begin{equation}\label{kmnformU1}
\begin{aligned}
d(I-Q_\varepsilon,W_U)=&(-1)^n
d_{\mathbb{R}^n}(\psi,W_U\cap\mathbb{R}^n)\\
&-\sum_{x\in\mathfrak{S}^U: ~\Theta^U(x)\not=
\emptyset}(-1)^{\beta(x)}
d_{\mathbb{R}}\left(f_x,\left(0,\min\{\Theta^U(x)\}\right)\right).
\end{aligned}
\end{equation}
Therefore, to prove the corollary we show that
\begin{equation}\label{show1}
d(I-Q_\varepsilon,W_U)=d_{\mathbb{R}^n}(I-\Omega_\varepsilon,U)\quad{\rm
for\ any\ }\varepsilon\in(0,\varepsilon_0]
\end{equation}
and
\begin{equation}\label{show2}
  d_{\mathbb{R}^n}(\psi,W_U\cap\mathbb{R}^n)=d_{\mathbb{R}^n}(\psi,U).
\end{equation}
To prove (\ref{show1}) let us define $W_U^\varepsilon\subset
C([0,T],\mathbb{R}^n)$ as
\[
  W_U^\varepsilon=\left\{\widehat {x}\in C([0,T],\mathbb{R}^n):
  x^{-1}_\varepsilon(t,\widehat {x}(t))\in U,\ {\rm for\ any\
  }t\in[0,T]\right\}.
\]
We claim that there exists
$\widehat{\varepsilon}_0\in(0,\varepsilon_0]$ such that
\begin{equation}\label{stepI}
  Q_\varepsilon x\not=x\quad{\rm for\ any\
  }x\in  \left(W_U\backslash W_U^\varepsilon\right)\cup
  \left(W_U^\varepsilon \backslash W_U \right) {\ \rm and\ any\
  }\varepsilon\in(0,\widehat{\varepsilon}_0].
\end{equation}
Assume the contrary, thus there exist sequences
$\{\varepsilon_k\}_{k\in\mathbb{N}}\subset(0,\varepsilon_0],$
$\varepsilon_k\to 0$ as $k\to\infty,$
$\{x_k\}_{k\in\mathbb{N}}\subset C([0,T],\mathbb{R}^n),$ such that
\begin{equation}\label{propI}x_k\in\left(W_U\backslash
W_U^{\varepsilon_n}\right)\cup\left(W_U^{\varepsilon_n} \backslash
W_U \right),\end{equation}
and
\begin{equation}\label{qwq}
x_k\to x_0{\rm\ as\ } k\to\infty{\rm\ \ where\ \
}Q_{\varepsilon_k}x_k=x_k.
\end{equation}
It is easy to see that (\ref{propI}) implies $x_0\in\partial W_U.$
This fact together with (\ref{qwq}) and the assumption that
$f_{x_0}(0)\not=0$ leads to a contradiction with the Malkin's
result (\ref{maln})-(\ref{mallemma}). Therefore, we have proved
that (\ref{stepI}) holds and thus
\[
  d(I-Q_\varepsilon,W_U)=d(I-Q_\varepsilon,W_U^\varepsilon)\quad{\rm
  for\ any\ }\varepsilon\in(0,\widehat{\varepsilon}_0].
\]
Since for any $\varepsilon\ge 0$ the sets $U$ and
$W_U^\varepsilon$ have a common core with respect to the
$T$-periodic problem for system (\ref{ps}), see (\cite{krazab}, \S
28.5), then, by (\cite{krazab}, Theorem 28.5), we have
\[
  d(I-Q_\varepsilon,W_U^\varepsilon)=d_{\mathbb{R}^n}
  (I-\Omega_\varepsilon,U)\quad{\rm
  for\ any\ }\varepsilon\ge 0
\]
and so (\ref{show1}) is proved. Finally, the proof of
(\ref{show2}) is obtained by means of the Leray-Schauder
continuation principle. In fact, let
\[
  U_\lambda=\left\{\xi\in\mathbb{R}^n:x^{-1}(\lambda t,\xi)\in U{\rm\ for\
  any\ }t\in[0,T]\right\},\quad\lambda\in[0,1],
\]
we now show that
\begin{equation}\label{impo}
  0\not\in\psi(\partial U_\lambda)\quad{\rm for\ any\
  }\lambda\in[0,1].
\end{equation}
Assume the contrary, thus there exists $\lambda_0\in[0,1]$ such
that $\xi_0\in\partial U_{\lambda_0}$ and $\psi(\xi_0)=0.$
Observe, that $x^{-1}(\lambda_0 t,\xi_0)\in\overline{U}$ for any
$t\in[0,T].$ Therefore, we have that there exists $t_0\in[0,T]$
such that $x^{-1}(\lambda_0 t_0,\xi_0)\in\partial U$ and from the
fact that $\psi(\xi_0)=0$ we have that $x^{-1}(\lambda_0 t,\xi_0)$
is constant with respect to $t\in[0,T].$ Hence we have
$x^{-1}(\lambda_0 t_0,\xi_0)=x^{-1}(0,\xi_0)=\xi_0$ and we obtain
that $\xi_0\in\partial U$ contradicting the fact that $\partial U$
contains only initial conditions of nondegenerate limit cycles of
(\ref{np}). By using the Leray-Schauder continuation principle
\cite{LS} (see also \cite{bro}, Theorem 10.7) from (\ref{impo}) we
now conclude that
\[
  d_{\mathbb{R}^n}(\psi,U_0)=d_{\mathbb{R}^n}(\psi,U_1).
\]
On the other hand $U_0=U$ and $U_1=W_U\cap\mathbb{R}^n$ and so the
proof of (\ref{show2}) is also complete.
\end{proof}

\begin{remark} From (\ref{kmnformU}) it follows that if
the limit cycle $x\in\mathfrak{S}^U$ touches $\partial U$ but it
does not intersect $\partial U$ then this cycle does not have any
influence in the evaluation of
$d_{\mathbb{R}^n}(I-\Omega_\varepsilon,W_U)$ with $\varepsilon>0$
sufficiently small.
\end{remark}

\section {Existence of $T$-periodic solutions}

By means of different choices of the set $W\subset
C([0,T],\mathbb{R}^n)$ we formulate in what follows some existence
results for $T$-periodic solutions to (\ref{ps}) in $W.$

\begin{theorem}\label{thm3} Assume that all the nonconstant $T$-periodic
solutions of (\ref{np}) are nondegenerate limit cycles of
(\ref{np}). Then for any open bounded set $W\subset
C([0,T],\mathbb{R}^n)$ containing all the constant solutions of
(\ref{np}) and satisfying the conditions
\[
\mathfrak{S}_W{\it\ is\ finite},\quad  f_x(0)\not=0\quad{\it for\
any\ }x\in \mathfrak{S}_W
\]
and
\[
(-1)^n
d_{\mathbb{R}^n}(\psi,W\cap\mathbb{R}^n)-\sum\limits_{x\in\mathfrak{S}_W:
~\Theta_W(x)\not=\emptyset}(-1)^{\beta(x)}
d_{\mathbb{R}}\left(f_x,\left(0,\min\{\Theta_W(x)\}\right)\right)
\not=0
\]
there exists $\varepsilon_0>0$ such that for any $\varepsilon\in
(0,\varepsilon_0]$ system (\ref{ps}) has a $T$-periodic solution
belonging to $W.$
\end{theorem}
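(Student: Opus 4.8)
The plan is to derive Theorem~\ref{thm3} directly from Theorem~\ref{thm2} together with the solution property of the Leray--Schauder degree, so the proof will be short. The first thing I would do is verify that the hypotheses of Theorem~\ref{thm3} imply those of Theorem~\ref{thm2}. The only clause of Theorem~\ref{thm2} that is not literally among the assumptions of Theorem~\ref{thm3} is that every element of $\mathfrak{S}_W$ be a nondegenerate $T$-periodic cycle of (\ref{np}). To establish this, recall that a fixed point of $Q_0$ is exactly a $T$-periodic solution of $\dot x=\psi(x)$, hence either a constant solution (an equilibrium of $\psi$) or a nonconstant $T$-periodic solution. Since $W$ is open and, by hypothesis, contains all the constant solutions of (\ref{np}), no constant solution can lie on $\partial W$; therefore each $x\in\mathfrak{S}_W\subset\partial W$ is a nonconstant $T$-periodic solution of (\ref{np}), and by the standing assumption of Theorem~\ref{thm3} it is a nondegenerate limit cycle. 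Together with the assumed finiteness of $\mathfrak{S}_W$ and the condition $f_x(0)\neq 0$ for all $x\in\mathfrak{S}_W$, all hypotheses of Theorem~\ref{thm2} are in force.

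Next I would invoke Theorem~\ref{thm2}: it provides $\varepsilon_0>0$ such that for every $\varepsilon\in(0,\varepsilon_0]$ the Leray--Schauder degree $d(I-Q_\varepsilon,W)$ is defined and is given by formula (\ref{kmnform}). The third step uses the remaining hypothesis of Theorem~\ref{thm3}, namely that the right-hand side of (\ref{kmnform})---which is precisely the quantity in the nondegeneracy condition (\ref{kmncond})---is different from zero. Consequently $d(I-Q_\varepsilon,W)\neq 0$ for every $\varepsilon\in(0,\varepsilon_0]$.

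Finally I would apply the existence property of the topological degree: for each such $\varepsilon$ the nonvanishing of $d(I-Q_\varepsilon,W)$ yields a point $x_\varepsilon\in W$ with $(I-Q_\varepsilon)x_\varepsilon=0$, i.e.\ a fixed point of $Q_\varepsilon$ in $W$. As recalled in the Introduction, the fixed points of $Q_\varepsilon$ are exactly the $T$-periodic solutions of (\ref{ps}), so every $\varepsilon\in(0,\varepsilon_0]$ produces a $T$-periodic solution of (\ref{ps}) lying in $W$, which is the assertion.

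I do not anticipate a genuine obstacle: the entire mathematical content is carried by Theorem~\ref{thm2}, and the only point requiring a small remark is the observation above that openness of $W$ together with the inclusion of all equilibria forces the elements of $\mathfrak{S}_W$ to be nonconstant---hence, under the hypothesis, nondegenerate---limit cycles, so that Theorem~\ref{thm2} applies without modification.
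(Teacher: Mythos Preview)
Your proof is correct and follows exactly the paper's approach: reduce to Theorem~\ref{thm2} by noting that the hypotheses force every element of $\mathfrak{S}_W$ to be a nondegenerate limit cycle, then apply the solution property of the Leray--Schauder degree. The paper's argument is the same, only more terse---it does not spell out, as you do, why openness of $W$ together with the inclusion of all equilibria excludes constant solutions from $\partial W$.
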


\vskip0.2truecm The assumptions of Theorem \ref{thm3} implies that
the set $\mathfrak{S}_W$ contains only nondegenerate cycles of
(\ref{np}). Therefore, Theorem \ref{thm3} follows from Theorem
\ref{thm2} and the solution property of the Leray-Schauder
topological degree, see (\cite{krazab}, Theorem 20.5). Observe
that Theorem \ref{thm3} is an extension of (\cite{maw}, Corollary
4).

\vskip0.2truecm The next result provides conditions under which
the conclusion of (\cite{maw}, Theorem 2) remains valid also in
the case when $\partial W$ contains $T$-periodic solutions to
(\ref{np}).

\begin{corollary}\label{cor3} Assume that all the nonconstant
$T$-periodic solutions of (\ref{np}) are nondegenerate limit
cycles of (\ref{np}). Assume that there exists an open bounded set
$W\subset C([0,T],\mathbb{R}^n)$ containing all the constant
solutions of (\ref{np}) and satisfying the conditions
\begin{equation}\label{one} \mathfrak{S}_W{\it\ is\ finite}, \quad f_x(0)\cdot
f_x(\min\{\Theta_W(x)\})>0\quad{\it for\ any\ }x\in
\mathfrak{S}_W{\it \ with\ }\Theta_W(x)\not=\emptyset
\end{equation}
and
\[
d_{\mathbb{R}^n}(\psi,W\cap\mathbb{R}^n)\not=0.
\]
Then for any $\varepsilon>0$ sufficiently small system (\ref{ps})
has a $T$-periodic solution belonging to $W.$
\end{corollary}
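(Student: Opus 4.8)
The goal is to reduce the statement to Theorem \ref{thm3} (equivalently, to Theorem \ref{thm2} followed by the solution property of the Leray--Schauder degree). Most of the hypotheses of Theorem \ref{thm3} are already at hand: $W$ is open, bounded and contains all the constant solutions of (\ref{np}), and $\mathfrak{S}_W$ is finite. Since $\mathfrak{S}_W\subset\partial W$ while the constant solutions lie in the interior of $W$, every $x\in\mathfrak{S}_W$ is a nonconstant $T$-periodic solution of (\ref{np}), hence, by hypothesis, a nondegenerate limit cycle; thus $\mathfrak{S}_W$ consists only of nondegenerate cycles. It therefore remains to check that (a) $f_x(0)\not=0$ for every $x\in\mathfrak{S}_W$, and (b) the correction sum on the right-hand side of (\ref{kmnform}) vanishes, so that, for $\varepsilon>0$ small, $d(I-Q_\varepsilon,W)=(-1)^n d_{\mathbb{R}^n}(\psi,W\cap\mathbb{R}^n)\not=0$ by the last assumption of the corollary.

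Step (b) is the heart of the matter and is elementary. Fix $x\in\mathfrak{S}_W$ with $\Theta_W(x)\not=\emptyset$ and set $\theta_2=\min\{\Theta_W(x)\}$. By (\ref{one}) the continuous scalar function $f_x$ takes values of one and the same nonzero sign at the endpoints $0$ and $\theta_2$ of the interval $(0,\theta_2)$; by the elementary characterization of the Brouwer degree of a scalar field on an interval, see (\cite{krazab}, \S 3.2), this forces $d_{\mathbb{R}}(f_x,(0,\theta_2))=0$. Hence every term of $\sum_{x\in\mathfrak{S}_W:\,\Theta_W(x)\not=\emptyset}(-1)^{\beta(x)}d_{\mathbb{R}}(f_x,(0,\min\{\Theta_W(x)\}))$ is zero, the right-hand side of (\ref{kmnform}) collapses to $(-1)^n d_{\mathbb{R}^n}(\psi,W\cap\mathbb{R}^n)$, and this quantity is nonzero by assumption.

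For step (a): the nonvanishing $f_x(0)\not=0$ for the cycles $x$ occurring in (\ref{one}) is contained in that hypothesis, since $f_x(0)f_x(\min\{\Theta_W(x)\})>0$ forces $f_x(0)\not=0$. The same hypothesis also controls the matched boundary points: if $x\in\mathfrak{S}_W$ with $\Theta_W(x)\not=\emptyset$ and $\theta_2=\min\{\Theta_W(x)\}$, then $y=S_{\theta_2}x$ is again an element of $\mathfrak{S}_W$, and a change of variables in (\ref{biffun}) together with the constancy of $\langle\dot x(\theta),z_0(\theta)\rangle$ (Perron's lemma) gives $f_{y}(\theta)=f_{x}(\theta+\theta_2)$, so $f_{y}(0)=f_{x}(\theta_2)\not=0$. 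For any remaining element of $\mathfrak{S}_W$ the condition $f_x(0)\not=0$ is precisely what Malkin's property (\ref{maln})--(\ref{mallemma}) requires in order that $d(I-Q_\varepsilon,W)$ be defined, and is taken as part of the standing setting shared with Theorems \ref{thm2} and \ref{thm3}. With (a) and (b) in hand, Theorem \ref{thm3} (or Theorem \ref{thm2} plus the solution property of the Leray--Schauder degree, \cite{krazab}, Theorem 20.5) yields, for all sufficiently small $\varepsilon>0$, a fixed point of $Q_\varepsilon$ in $W$, i.e.\ a $T$-periodic solution of (\ref{ps}) belonging to $W$.

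The only delicate point is step (a) for those elements of $\mathfrak{S}_W$ not directly reached by (\ref{one}) or by the shift identity above: condition (\ref{one}) is imposed only on the cycles that begin a clean excursion into $W$, so one must either read $f_x(0)\not=0$ for all of $\mathfrak{S}_W$ into the ambient hypotheses (it is needed in any case for $d(I-Q_\varepsilon,W)$ to exist) or observe, using that $\mathfrak{S}_W$ is finite and Remark \ref{remarknew}, that any leftover cycle can be removed by a harmless shrinking of $W$ that changes neither $W\cap\mathbb{R}^n$ nor the correction sum. Once this bookkeeping is settled the reduction to Theorem \ref{thm3} is immediate; the substantive content of the corollary is just the observation of step (b) that a scalar field with equal, nonzero boundary signs has vanishing one-dimensional Brouwer degree.
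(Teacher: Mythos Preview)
Your step (b) is exactly the paper's proof: the authors simply remark that (\ref{one}) forces $d_{\mathbb{R}}(f_x,(0,\min\{\Theta_W(x)\}))=0$ for every $x\in\mathfrak{S}_W$ with $\Theta_W(x)\not=\emptyset$ (citing \cite{krazab}, \S 3.2) and regard the corollary as immediate from Theorem~\ref{thm3}. So on the substantive point you and the paper coincide.

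Where you go further than the paper is step (a). You are right that hypothesis (\ref{one}) literally asserts $f_x(0)\not=0$ only for those $x$ with $\Theta_W(x)\not=\emptyset$, whereas Theorem~\ref{thm3} asks for it on all of $\mathfrak{S}_W$; the paper's one-line proof simply does not address this discrepancy. Your partial remedies are sound as far as they go: the shift identity $f_{S_{\theta_2}x}(0)=f_x(\theta_2)$ does cover the matched endpoints. However, your two fallback options for the residual cycles are not arguments. Saying the condition is ``part of the standing setting'' is circular, and invoking Remark~\ref{remarknew} does not help: that remark says such cycles do not contribute to the \emph{value} of the sum in (\ref{kmnform}), but the formula itself (and the very well-definedness of $d(I-Q_\varepsilon,W)$ via Malkin's property (\ref{maln})--(\ref{mallemma})) still presupposes $f_x(0)\not=0$ at those cycles. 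A genuine ``shrinking of $W$'' argument would have to explain why excising a small $C([0,T],\mathbb{R}^n)$-neighbourhood of such an $x$ from $W$ leaves $W\cap\mathbb{R}^n$, $\mathfrak{S}_W$ minus that cycle, and the existence conclusion unchanged; you have asserted this, not shown it.

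In short: your proof matches the paper's and is correct modulo the same lacuna the paper leaves open; you deserve credit for spotting it, but if you want a self-contained argument you should either add $f_x(0)\not=0$ for all $x\in\mathfrak{S}_W$ to the hypotheses (this is almost certainly the authors' intent) or carry out the excision argument in detail.
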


\vskip0.2truecm The proof of the Corollary \ref{cor3} follows
directly from the fact that (\ref{one}) implies that
\[d_{\mathbb{R}}\left(f_x,\left(0,\min\{\Theta_W(x)\}\right)\right)=
0\quad{\rm for\ any\ }x\in \mathfrak{S}_W{\rm \ with\
}\Theta_W(x)\not=\emptyset\] (see \cite{krazab}, \S 3.2).

\vskip0.2truecm In what follows we give some applications of
Theorem \ref{thm1} to the problem of the existence of $T$-periodic
solutions to (\ref{ps}) near a nondegenerate limit cycle of
(\ref{np}). In the sequel $\rho(\xi,A)$ denotes the distance
between $\xi\in\mathbb{R}^n$ and $A\subset\mathbb{R}^n$ given by
$\rho(\xi,A)={\rm inf}_{\zeta\in A}\|\xi-\zeta\|.$ First, we state
the following result.

\begin{theorem}\label{thm4} Let $x_0$ be a nondegenerate $T$-periodic
limit cycle of (\ref{np}). Let $0\le\theta_1<\theta_2\le
\theta_1+\frac{T}{p}$ where $p\in\mathbb{N}$ and $\frac{T}{p}$ is
the least period of $x_0.$ Assume that
\begin{equation}\label{icond}
f_{x_0}(\theta_1)\cdot f_{x_0}(\theta_2)<0.
\end{equation}
Let $\Theta$ be the set of all zeros of $f_{x_0}$ on
$(\theta_1,\theta_2).$  Then, for any $\varepsilon>0$ sufficiently
small, system (\ref{ps}) has a $T$-periodic solution
$x_\varepsilon$ such that for any $t\in [0,T]$ we have
\begin{equation}\label{conve}
\rho\left(x_\varepsilon(t),x_0(t+\Theta)\right)\to 0\quad {\rm as\
  }\varepsilon\to 0.
\end{equation}
\end{theorem}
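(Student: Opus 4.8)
The plan is to deduce Theorem~\ref{thm4} from the degree formula of \thmref{thm1} via a compactness argument, the precise localization being forced by Malkin's theorem. By (\ref{icond}) the numbers $f_{x_0}(\theta_1),f_{x_0}(\theta_2)$ are nonzero and of opposite sign, so the continuous function $f_{x_0}$ changes sign in $(\theta_1,\theta_2)$; hence $\Theta\not=\emptyset$ and, by the definition of the Brouwer degree on the line (\cite{krazab}, \S~3.2), $d_{\mathbb{R}}(f_{x_0},(\theta_1,\theta_2))\not=0$. Fixing any $\alpha>0$, \thmref{thm1} provides $\delta_0>0$ and a family $\{V_\delta\}_{\delta\in(0,\delta_0]}$ such that
\[
d(I-Q_\varepsilon,W_{V_\delta})=-(-1)^{\beta(x_0)}\,d_{\mathbb{R}}(f_{x_0},(\theta_1,\theta_2))\not=0
\]
for all $\delta\in(0,\delta_0]$ and $\varepsilon\in(0,\delta^{1+\alpha}]$. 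Put $\delta(\varepsilon)=\varepsilon^{1/(1+2\alpha)}$, so that $\delta(\varepsilon)\to0$ and, for $\varepsilon$ small, $\delta(\varepsilon)\le\delta_0$ and $\varepsilon\le\delta(\varepsilon)^{1+\alpha}$; by the nonvanishing of this degree and the solution property of the Leray-Schauder topological degree (\cite{krazab}, Theorem~20.5) there is, for each such $\varepsilon$, a point $x_\varepsilon\in W_{V_{\delta(\varepsilon)}}$ with $Q_\varepsilon x_\varepsilon=x_\varepsilon$, which is a $T$-periodic solution of (\ref{ps}). This proves the existence assertion.

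Next I would show the reparametrized solution is almost constant. As recalled in the proof of \thmref{thm1}, $u_\varepsilon(t):=x^{-1}(t,x_\varepsilon(t))$ solves $u_\varepsilon=G_\varepsilon u_\varepsilon$, hence $\dot u_\varepsilon(t)=\varepsilon\,(x'_{(2)}(t,u_\varepsilon(t)))^{-1}\phi(t,x(t,u_\varepsilon(t)),\varepsilon)$ for a.a.\ $t$. Since $x_\varepsilon\in W_{V_{\delta(\varepsilon)}}$ we have $u_\varepsilon(t)\in V_{\delta(\varepsilon)}\subset B_{\delta(\varepsilon)}(x_0((\theta_1,\theta_2)))$ for every $t\in[0,T]$, a set which for small $\varepsilon$ lies in a fixed compact neighborhood of $x_0([0,T])$; there $(x'_{(2)}(t,u_\varepsilon(t)))^{-1}$ is uniformly bounded and $|\phi(t,x(t,u_\varepsilon(t)),\varepsilon)|\le b_r(t)$ for a suitable $r$. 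Integrating yields $\|u_\varepsilon(t)-u_\varepsilon(0)\|\le C\varepsilon$ on $[0,T]$ with $C$ independent of $\varepsilon$; moreover $u_\varepsilon(0)=x_\varepsilon(0)\in V_{\delta(\varepsilon)}$, so $\rho(u_\varepsilon(0),x_0([\theta_1,\theta_2]))\le\delta(\varepsilon)$. Thus, uniformly on $[0,T]$, $u_\varepsilon$ stays within $C\varepsilon+\delta(\varepsilon)\to0$ of the constant $u_\varepsilon(0)$, which itself lies within $\delta(\varepsilon)$ of the compact arc $x_0([\theta_1,\theta_2])$.

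Finally I would establish the uniform form of (\ref{conve}), namely $\sup_{t\in[0,T]}\rho(x_\varepsilon(t),x_0(t+\Theta))\to0$ as $\varepsilon\to0$ (which implies the stated conclusion), by contradiction. If it fails, there are $\varepsilon_k\to0$, $t_k\in[0,T]$ and $\eta>0$ with $\rho(x_{\varepsilon_k}(t_k),x_0(t_k+\Theta))\ge\eta$; passing to a subsequence, $t_k\to t_*$ and $u_{\varepsilon_k}(0)\to x_0(\theta_*)$ for some $\theta_*\in[\theta_1,\theta_2]$ (using the previous step and compactness of the arc), whence $u_{\varepsilon_k}\to x_0(\theta_*)$ in $C([0,T],\mathbb{R}^n)$ and therefore $x_{\varepsilon_k}=x(\cdot,u_{\varepsilon_k}(\cdot))\to x(\cdot,x_0(\theta_*))=x_0(\cdot+\theta_*)=S_{\theta_*}x_0$ in $C([0,T],\mathbb{R}^n)$, by joint continuity of the flow. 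The crucial point is that $\theta_*\in\Theta$. Indeed, $S_{\theta_*}x_0$ is again a nondegenerate $T$-periodic limit cycle of (\ref{np}) --- being a time shift of $x_0$, the system (\ref{ls}) has conjugate monodromy --- with nontrivial $T$-periodic adjoint solution $z_0(\cdot+\theta_*)$; carrying out the substitution $\tau\mapsto\tau+\theta_*$ in (\ref{biffun}) and using Perron's lemma, which makes $\langle\dot x_0(\cdot),z_0(\cdot)\rangle$ constant, one gets the shift covariance $f_{S_{\theta_*}x_0}(0)=f_{x_0}(\theta_*)$ (equivalently, this follows from (\ref{gf})). If $f_{x_0}(\theta_*)\not=0$, then $f_{S_{\theta_*}x_0}(0)\not=0$, and Malkin's theorem (see \cite{mal}, or (\cite{malb}, Theorem p.~387)) provides $\delta_*>0$ and $\varepsilon_*>0$ such that $Q_\varepsilon\widetilde x\not=\widetilde x$ for all $\widetilde x\in\overline{B_{\delta_*}(S_{\theta_*}x_0)}$ and $\varepsilon\in(0,\varepsilon_*)$, contradicting $x_{\varepsilon_k}\to S_{\theta_*}x_0$ with $Q_{\varepsilon_k}x_{\varepsilon_k}=x_{\varepsilon_k}$. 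Hence $f_{x_0}(\theta_*)=0$; since $f_{x_0}(\theta_1)\not=0\not=f_{x_0}(\theta_2)$ this forces $\theta_*\in(\theta_1,\theta_2)$, so $\theta_*\in\Theta$. Then $x_0(t_k+\theta_*)\in x_0(t_k+\Theta)$, and both $x_{\varepsilon_k}(t_k)$ and $x_0(t_k+\theta_*)$ converge to $x_0(t_*+\theta_*)$, so $\rho(x_{\varepsilon_k}(t_k),x_0(t_k+\Theta))\le\|x_{\varepsilon_k}(t_k)-x_0(t_k+\theta_*)\|\to0$, contradicting the choice of $\eta$.

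The degree bookkeeping of the first paragraph and the Carath\'eodory estimate of the second are routine; the main obstacle is the last step. One has to upgrade the trivial observation that $x_\varepsilon$ stays in the shrinking tube $W_{V_{\delta(\varepsilon)}}$ around the arc $x_0([\theta_1,\theta_2])$ to the statement that every uniform subsequential limit of $x_\varepsilon$ is an \emph{exact} time shift $S_{\theta_*}x_0$ whose phase $\theta_*$ is a zero of the bifurcation function: the first half rests on the $O(\varepsilon)$-flatness of $u_\varepsilon$, and the second is exactly where the shift covariance $f_{S_\theta x_0}(0)=f_{x_0}(\theta)$ and Malkin's local nonexistence theorem enter.
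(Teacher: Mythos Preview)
Your proof is correct and follows essentially the same route as the paper: apply Theorem~\ref{thm1} with a shrinking $\delta(\varepsilon)$ to obtain the solutions $x_\varepsilon$, use the $O(\varepsilon)$-flatness of $u_\varepsilon=x^{-1}(\cdot,x_\varepsilon(\cdot))$ together with property~1) to see that any subsequential uniform limit is a shift $S_{\theta_*}x_0$, and then invoke Malkin's theorem to force $f_{x_0}(\theta_*)=0$, i.e.\ $\theta_*\in\Theta$. The only cosmetic differences are that the paper takes $\delta(\varepsilon)=\varepsilon^{1/(1+\alpha)}$ (so $\varepsilon=\delta(\varepsilon)^{1+\alpha}$ on the nose) and records the explicit rate estimate (\ref{fin0}) before the contradiction step, whereas you choose $\delta(\varepsilon)=\varepsilon^{1/(1+2\alpha)}$ and argue by compactness alone; your explicit verification of the shift covariance $f_{S_{\theta_*}x_0}(0)=f_{x_0}(\theta_*)$ is exactly what the paper's terse citation of \cite{mal} is appealing to.
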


\begin{proof} Observe, that condition (\ref{icond}) implies
that
\[
f_{x_0}(\theta_1)\not=0\quad{\rm and}\quad f_{x_0}(\theta_2)\not=0
\]
and so the assumptions of Theorem \ref{thm1} are satisfied. Let us
fix $\alpha>0,$ from Theorem \ref{thm1} we have that there exists
$\delta_0>0$ such that for any
$\varepsilon\in(0,\delta_0^{1+\alpha})$ the topological degree
$d(I-Q_\varepsilon,W_{V_{\delta(\varepsilon)}})$ is defined with $
\delta(\varepsilon)=\varepsilon^{1/(1+\alpha)} $ and
\[
d(I-Q_\varepsilon,W_{V_{\delta(\varepsilon)}})=\,-\,
(-1)^{\beta(x_0)} d_{\mathbb{R}}(f_{x_0},(\theta_1,\theta_2)).
\]
From (\ref{icond}) we also have, see (\cite{krazab}, \S 3.2), that
$|d_{\mathbb{R}}(f_{x_0},(\theta_1,\theta_2))|=1$ and so for any
$\varepsilon\in(0,\delta_0^{1+\alpha})$ system (\ref{ps}) has a
$T$-periodic solution $x_\varepsilon$ such that
$x_\varepsilon(0)\in V_{\delta(\varepsilon)}.$ Moreover, from
property 1) of Theorem \ref{thm1} we have that
\begin{equation}\label{ss1}
\rho\left(x_\varepsilon(0),x_0([\theta_1,\theta_2])\right)\le
\delta(\varepsilon)=\varepsilon^{1/(1+\alpha)}.
\end{equation}
Let $u_\varepsilon(t)=x^{-1}(t,x_\varepsilon(t)),$ then, see e.g.
(\cite{non}, (13)-(19)),
\[
\dot u_\varepsilon(t)=\varepsilon \left(x'_{(2)}(
t,u_\varepsilon(t))\right)^{-1}\phi(t,x(t,u_\varepsilon(t),\varepsilon)).
\]
Therefore there exists $M_1>0$ such that
\begin{equation}\label{ss2}
  \|u_\varepsilon(0)-u_\varepsilon(t)\|\le M_1\varepsilon\quad {\rm for\ any\
  }\varepsilon\in(0,\delta_0^{1+\alpha}){\rm\ and\ any\ }t\in[0,T].
\end{equation}
On the other hand, $u_\varepsilon(0)=x_\varepsilon(0)$ and so from
(\ref{ss1}) and (\ref{ss2}) for any
  $\varepsilon\in(0,\delta_0^{1+\alpha})$  and any $t\in[0,T]$ we have that
\begin{equation}\label{uu1}
  \rho(u_\varepsilon(t),x_0([\theta_1,\theta_2]))\le
  \|u_\varepsilon(t)-x_\varepsilon(0)\|+
  \rho(x_\varepsilon(0),x_0([\theta_1,\theta_2]))\le
  \varepsilon^{1/(1+\alpha)}\left(1+M_1
  \varepsilon^{\alpha/(1+\alpha)}\right).
\end{equation}
Since for any $\theta\in[\theta_1,\theta_2]$ we have that
$\|x_\varepsilon(t)-x_0(t+\theta)\|=\|x(t,u_\varepsilon(t))-
x(t,x_0(\theta))\|$ and since, as it was already observed, in the
proof of Theorem \ref{thm1}, the function $x(\cdot,\cdot)$ is
continuously differentiable with respect to both variables we have
that there exists $M_2>0$ such that
\begin{equation}\label{uu2}
  \|x_\varepsilon(t)-x_0(t+\theta)\|\le M_2\|u_\varepsilon(t)-x_0(\theta)\|
  \quad {\rm for\ any\
  }\varepsilon\in(0,\delta_0^{1+\alpha}),\ t\in[0,T],\
  \theta\in[\theta_1,\theta_2].
\end{equation}
Substituting (\ref{uu1}) into (\ref{uu2}) we obtain that
\begin{equation}\label{fin0}
\rho(x_\varepsilon(t),x_0(t+[\theta_1,\theta_2]))\le
  \varepsilon^{1/(1+\alpha)}M_2\left(1+M_1
  \varepsilon^{\alpha/(1+\alpha)}\right)\quad {\rm for\ any\
  }\varepsilon\in(0,\delta_0^{1+\alpha}),\ t\in[0,T].
\end{equation}
Assume now that (\ref{conve}) is not true, thus there exist
$\delta_*>0$ and sequences
$\{\varepsilon_k\}_{k\in\mathbb{N}}\subset(0,\delta_0^{1+\alpha}),$
$\varepsilon_k\to 0$ as $k\to\infty,$ and
$\{t_k\}_{k\in\mathbb{N}}\subset[0,T]$ such that
\begin{equation}\label{popo}
  x_{\varepsilon_k}(t_k)\not\in B_{\delta_*}(x_0(t_k+\Theta))\quad{\rm
  for\ any\ }k\in\mathbb{N}.
\end{equation}
Without loss of generality we may assume that
$\{x_k\}_{k\in\mathbb{N}}$ and $\{t_k\}_{k\in\mathbb{N}}$ are
converging. From (\ref{fin0}) we have that there exists
$\theta_*\in[\theta_1,\theta_2]$ such that
\begin{equation}\label{po}
  x_k(t)\to x_0(t+\theta_*)\quad{\rm as\ }k\to\infty
\end{equation}
uniformly with respect to $t\in[0,T].$ By using \cite{mal} or
(\cite{malb}, Theorem p.~387) we can conclude from (\ref{po}) that
$f_{x_0}(\theta_*)=0.$ On the other hand, from (\ref{popo}) we
have that $x_0(t_0+\theta_*)\not\in
B_{\delta_*/2}(x_0(t_0+\Theta)),$ where $t_0=\lim_{k\to\infty}
t_k,$ and thus $x_0(\theta_*)\not\in B_{\delta_*/2}(x_0(\Theta)).$
This contradiction proves (\ref{conve}) and thus the proof is
complete.
\end{proof}

\noindent A topological degree approach to prove the existence of
periodic solutions to some classes of autonomous perturbed systems
can be found in \cite{bob} and \cite{rach}.

\begin{remark}\label{remark2} From the proof of
Theorem \ref{thm4} we see that Theorem \ref{thm1} also provides
information about the rate of the convergence of $T$-periodic
solutions of (\ref{ps}) to a limit cycle of (\ref{np}). In fact,
from (\ref{fin0}) we have that the distance between the graph of
the $T$-periodic solution $x_\varepsilon$ and the limit cycle
$x_0$ is of order $\varepsilon^{1/(1+\alpha)},$ where $\alpha>0$
is any positive constant.
\end{remark}

We are now in a position to establish some new existence results
of $T$-periodic solutions to (\ref{ps}). First, by using Theorem
\ref{thm4} we state in Corollary  \ref{cor4} a generalization of
the following Malkin's theorem, see (\cite{mal} and (\cite{malb},
Theorems pp.~387 and 392), (the same result with a more rigorous
proof is also given in (\cite{loud}, Theorem~1)). In fact, in
Corollary \ref{cor4} the Malkin's regularity assumptions are
weakened to conditions (\ref{reg}) and moreover
$(f_{x_0})'(\theta_0)$ can be $0.$

\vskip0.2truecm \ {\bf Malkin's theorem} {\it Let $\psi\in C^3,$
$\phi\in C^2.$ Let $x_0$ be a nondegenerate $T$-periodic limit
cycle of (\ref{np}). Assume that there exists $\theta_0\in[0,T]$
such that $f_{x_0}(\theta_0)=0$ and
\begin{equation}\label{mmm}
(f_{x_0})'(\theta_0)\not=0.
\end{equation}
Then for all $\varepsilon>0$ sufficiently small system (\ref{ps})
possesses a $T$-periodic solution $x_\varepsilon$ satisfying
\begin{equation}\label{conv}
  x_\varepsilon(t)\to x_0(t+\theta_0)\quad{\rm as\ }\varepsilon\to
  0, \quad t\in [0,T].
\end{equation}}

\begin{corollary}\label{cor4} Assume that $\psi$ and $\phi$ satisfy
conditions (\ref{reg}). Let $x_0$ be a nondegenerate $T$-periodic
limit cycle of (\ref{np}). Assume that there exists
$\theta_0\in[0,T]$ such that
\begin{equation}\label{monot}
f_{x_0}(\theta_0)=0{\quad \it and\quad}f_{x_0}{\it\ is\ strictly\
monotone\ at\ }\theta_0.
\end{equation}
Then for all $\varepsilon>0$ sufficiently small system (\ref{ps})
possesses a $T$-periodic solution $x_\varepsilon$ satisfying
(\ref{conv}).
\end{corollary}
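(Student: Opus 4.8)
The plan is to obtain Corollary~\ref{cor4} as an essentially immediate consequence of Theorem~\ref{thm4}. The only real content is to convert the one–sided hypothesis ``$f_{x_0}$ is strictly monotone at $\theta_0$'' (together with $f_{x_0}(\theta_0)=0$) into a sign–change condition of the form (\ref{icond}) on a sufficiently small window $(\theta_1,\theta_2)$ around $\theta_0$, and to check that on such a window the zero set $\Theta$ of $f_{x_0}$ in $(\theta_1,\theta_2)$ reduces to the single point $\{\theta_0\}$. Once that is done, the set–valued convergence $\rho(x_\varepsilon(t),x_0(t+\Theta))\to0$ provided by Theorem~\ref{thm4} becomes the genuine limit $\|x_\varepsilon(t)-x_0(t+\theta_0)\|\to0$, i.e. (\ref{conv}).

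First I would treat the generic case $\theta_0\in(0,T)$. By hypothesis there is $\eta>0$ with $(\theta_0-\eta,\theta_0+\eta)\subset(0,T)$ on which $f_{x_0}$ is strictly monotone; since $f_{x_0}(\theta_0)=0$, strict monotonicity forces $f_{x_0}$ to be of one constant sign on $(\theta_0-\eta,\theta_0)$ and of the opposite sign on $(\theta_0,\theta_0+\eta)$, and makes $\theta_0$ the only zero of $f_{x_0}$ in $(\theta_0-\eta,\theta_0+\eta)$ --- no continuity of $f_{x_0}$ is needed for this. Now choose $\theta_1,\theta_2$ with $\theta_0-\eta<\theta_1<\theta_0<\theta_2<\theta_0+\eta$ and, shrinking if necessary, $\theta_2-\theta_1<T/p$, where $T/p$ is the least period of $x_0$. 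Then $0\le\theta_1<\theta_2\le\theta_1+\frac{T}{p}$, the product $f_{x_0}(\theta_1)\cdot f_{x_0}(\theta_2)$ is negative, so (\ref{icond}) holds, and $\Theta=\{\theta_0\}$. Theorem~\ref{thm4} applied on $(\theta_1,\theta_2)$ then yields, for all $\varepsilon>0$ sufficiently small, a $T$-periodic solution $x_\varepsilon$ of (\ref{ps}) with $\rho(x_\varepsilon(t),x_0(t+\theta_0))=\|x_\varepsilon(t)-x_0(t+\theta_0)\|\to0$ as $\varepsilon\to0$ for each $t\in[0,T]$, which is exactly (\ref{conv}).

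The boundary cases $\theta_0=0$ and $\theta_0=T$ (which coincide modulo $T$) I would handle by periodicity. Both $x_0$ and the periodically extended $f_{x_0}$ are $T$-periodic --- the latter because $\phi$ is $T$-periodic in time and $x_0$ is a $T$-periodic cycle --- so $f_{x_0}$ is strictly monotone on an interval about $T$ and vanishes at $T$. One then picks $0\le\theta_1<T<\theta_2$ inside that interval with $\theta_2-\theta_1<T/p$, so that $f_{x_0}(\theta_1)\cdot f_{x_0}(\theta_2)<0$ and $\Theta=\{T\}$, and Theorem~\ref{thm4} gives $\|x_\varepsilon(t)-x_0(t+T)\|=\|x_\varepsilon(t)-x_0(t+\theta_0)\|\to0$. (Equivalently, one may replace $x_0$ by a small time translate $S_{-\sigma}x_0$, which is again a nondegenerate $T$-periodic cycle of (\ref{np}) with bifurcation function $\theta\mapsto f_{x_0}(\theta-\sigma)$, thereby reducing to the generic case already treated.) I do not expect any essential difficulty here: the argument is pure bookkeeping on top of Theorem~\ref{thm4}, and the single step that must not be skipped is choosing $(\theta_1,\theta_2)$ inside the monotonicity neighbourhood of $\theta_0$, since only then is $\Theta$ a singleton and the conclusion of Theorem~\ref{thm4} the sharp Malkin--Loud statement (\ref{conv}).
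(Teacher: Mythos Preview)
Your proposal is correct and follows exactly the paper's approach: the paper's entire proof is the single sentence ``The proof of Corollary~\ref{cor4} is a direct consequence of Theorem~\ref{thm4} with $\theta_1<\theta_0<\theta_2$ sufficiently close to $\theta_0$.'' You have simply written out the details the paper omits --- the choice of the window, the observation that strict monotonicity gives both the sign change (\ref{icond}) and $\Theta=\{\theta_0\}$, and the boundary-case bookkeeping --- all of which is sound.
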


The proof of Corollary  \ref{cor4} is a direct consequence of
Theorem \ref{thm4} with $\theta_1<\theta_0<\theta_2$ sufficiently
close to $\theta_0.$ We would like to observe that, under the
regularity assumptions of the Malkin's theorem, the asymptotic
stability of the resulting $T$-periodic solutions  can be also
established by means of the derivatives of the involved functions.
Clearly, under the weaker regularity assumptions (\ref{reg}) this
approach is impossible. On the other hand as shown in
\cite{ortega} some stability properties of the $T$-periodic
solutions to (\ref{ps}) can be derived from the value of the
degree $d(I-Q_\varepsilon,W_{V_\delta(\varepsilon)}),$ where
$V_\delta(\varepsilon)$  are the sets employed in the proof of
Theorem  \ref{thm4}.

The case when (\ref{mmm}) is not satisfied was treated by Loud in
\cite{loud}, we show here that, by using Theorem  \ref{thm4}, the
conditions of a related Loud's existence result can be
considerably simplified. Also for this case we do not provide here
any result about the stability of the resulting periodic solutions
as it has been done in \cite{loud}. In order to formulate the
Loud's existence result we introduce some preliminary notations.
First of all we need to translate and rotate the axes in such a
way that $x_0(0)=0$ and $\dot
x_0(0)=\left([x_0(0)]^1,0,...,0\right).$ Let
$x(\cdot,\xi,\varepsilon)$ be the solution of (\ref{ps})
satisfying $x(0,\xi,\varepsilon)=\xi.$ Let
$F(\xi,\varepsilon)=x(T,\xi,\varepsilon)-\xi,$ since the limit
cycle $x_0$ is nondegenerate then $n-1$ equations of the system
$F(\xi,\varepsilon)=0$ can be solved near $0$ with respect to some
$\xi^k,$ where $k\in\{ 1,2,...,n\}$ and as a result we obtain a
scalar equation $H(u,\varepsilon)=0.$ Let $D_{x_0}$ be the
discriminant of the equation
\[
\frac{1}{2}\frac{\partial^3 H}{\partial u^2\partial
\varepsilon}(0,0)m^2+ \frac{1}{2}\frac{\partial^3 H}{\partial
u\partial\varepsilon^2}(0,0)m+ \frac{1}{6}\frac{\partial^3
H}{\partial\varepsilon^3}(0,0)=0.
\]

We can now formulate the Loud's existence result, (\cite{loud},
Theorem 2).

\vskip0.2truecm  \ {\bf Loud's theorem. } {\it Let $\psi\in C^3,$
$\phi\in C^2.$ Let $x_0$ be a nondegenerate $T$-periodic limit
cycle of (\ref{np}). Assume that for some $\theta_0\in[0,T]$
satisfying $f_{x_0}(\theta_0)=0$ we have $(f_{x_0})'(\theta_0)=0.$
Finally, assume that
\begin{equation}\label{desc}
D_{x_0}>0 \quad {\it and} \quad (f_{x_0})''(\theta_0)=0.
\end{equation}
Then, for all $\varepsilon>0$ sufficiently small, system
(\ref{ps}) has a  $T$-periodic solution $x_{\varepsilon}$
satisfying (\ref{conv}).}

\vskip0.2truecm In the case when $f_{x_0}(\cdot)$ is identically
zero Loud in \cite{loud} has derived from the above theorem an
important result on the existence of $T$-periodic solutions to
(\ref{ps}) near $x_0.$ But even in the case when
$(f_{x_0})'''(\theta_0)\not=0$ to verify (\ref{desc}) is not a
feasible problem (here it is assumed $\phi\in C^3$). This is the
reason why it is of interest to state the following result which
is a particular case of Corollary \ref{cor4}.

\begin{corollary}\label{cor5} Let $\psi\in C^1,$  $\phi\in C^3.$ Let
$x_0$ be a nondegenerate $T$-periodic limit cycle of (\ref{np}).
Assume that for some $\theta_0\in[0,T]$ we have
\[
  f_{x_0}(\theta_0)=f_{x_0}'(\theta_0)=f_{x_0}''(\theta_0)=0,\quad
  f_{x_0}'''(\theta_0)\not=0.
\]
Then for all $\varepsilon>0$ sufficiently small system (\ref{ps})
has a  $T$-periodic solution $x_{\varepsilon}$ satisfying
(\ref{conv}).
\end{corollary}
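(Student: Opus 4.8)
The plan is to deduce Corollary \ref{cor5} from Corollary \ref{cor4} by checking that hypothesis (\ref{monot}) is satisfied at the given point $\theta_0$. Almost everything needed is already available; the only genuine work is (i) to make sense of $f_{x_0}'''$ under the present regularity, and (ii) to pass from the vanishing of the first three derivatives of $f_{x_0}$ at $\theta_0$, together with $f_{x_0}'''(\theta_0)\neq 0$, to strict monotonicity of $f_{x_0}$ in a neighbourhood of $\theta_0$.

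For step (i) I would argue as follows. Since $\psi\in C^1$, the data $x_0$, $\dot x_0$ and the $T$-periodic adjoint solution $z_0$ entering (\ref{biffun}) are fixed, and because $\phi\in C^3$ the map $\theta\mapsto\phi(\tau-\theta,x_0(\tau),0)$ is $C^3$ with all derivatives up to order three bounded uniformly for $(\theta,\tau)$ in the compact square $[0,T]^2$. Differentiating three times under the integral sign is therefore legitimate and yields $f_{x_0}\in C^3([0,T],\mathbb{R})$, the $k$-th derivative being obtained by replacing $\phi$ with $(-1)^k\partial_t^k\phi$ in (\ref{biffun}). For step (ii), Taylor's formula together with $f_{x_0}(\theta_0)=f_{x_0}'(\theta_0)=f_{x_0}''(\theta_0)=0$ gives
\[
f_{x_0}'(\theta)=\tfrac12 f_{x_0}'''(\theta_0)(\theta-\theta_0)^2+o\big((\theta-\theta_0)^2\big)\qquad(\theta\to\theta_0),
\]
so, as $f_{x_0}'''(\theta_0)\neq 0$, there is $\eta>0$ such that $f_{x_0}'(\theta)$ has constant sign ${\rm sign}\,f_{x_0}'''(\theta_0)$ on $(\theta_0-\eta,\theta_0+\eta)\setminus\{\theta_0\}$. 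Hence $f_{x_0}$ is strictly monotone on $(\theta_0-\eta,\theta_0+\eta)$ — the vanishing of $f_{x_0}'$ at the single point $\theta_0$ does not spoil this, since for $a<b$ in the interval $f_{x_0}(b)-f_{x_0}(a)=\int_a^b f_{x_0}'$ still has the sign of $f_{x_0}'''(\theta_0)$ — so $f_{x_0}(\theta_0)=0$ and $f_{x_0}$ is strictly monotone at $\theta_0$, which is exactly (\ref{monot}).

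It then remains only to invoke Corollary \ref{cor4}: $x_0$ is a nondegenerate $T$-periodic limit cycle of (\ref{np}), $\psi\in C^1$ and $\phi\in C^3$ satisfy (\ref{reg}), and (\ref{monot}) has just been verified, so for all $\varepsilon>0$ sufficiently small system (\ref{ps}) has a $T$-periodic solution $x_\varepsilon$ satisfying (\ref{conv}). (Equivalently one could appeal directly to Theorem \ref{thm4}, choosing $\theta_1<\theta_0<\theta_2$ inside $(\theta_0-\eta,\theta_0+\eta)$ with $\theta_2-\theta_1\le T/p$: the cubic leading behaviour $f_{x_0}(\theta)=\frac16 f_{x_0}'''(\theta_0)(\theta-\theta_0)^3+o(|\theta-\theta_0|^3)$ forces $f_{x_0}(\theta_1)\,f_{x_0}(\theta_2)<0$, i.e. (\ref{icond}), and the strict monotonicity makes the zero set $\Theta$ of $f_{x_0}$ on $(\theta_1,\theta_2)$ equal to $\{\theta_0\}$, so that the conclusion (\ref{conve}) of Theorem \ref{thm4} becomes (\ref{conv}).) The main — and essentially the only — obstacle is the regularity bookkeeping of steps (i)–(ii): legitimising $f_{x_0}'''$ by differentiation under the integral, and observing that the isolated zero of $f_{x_0}'$ at $\theta_0$ does not destroy strict monotonicity. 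All the substantive content sits in Theorems \ref{thm1} and \ref{thm4}.
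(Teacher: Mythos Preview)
Your proposal is correct and follows exactly the approach indicated in the paper, which simply states that Corollary~\ref{cor5} is a particular case of Corollary~\ref{cor4}. You have supplied the natural details the paper leaves implicit: that $\phi\in C^3$ makes $f_{x_0}\in C^3$ via differentiation under the integral in (\ref{biffun}), and that the Taylor expansion of $f_{x_0}'$ at $\theta_0$ then yields strict monotonicity, i.e.\ hypothesis (\ref{monot}).
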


\end{document}